\documentclass[12pt,reqno]{article}

\usepackage[usenames]{color}
\usepackage{amssymb}
\usepackage{amsmath}
\usepackage{amsthm}
\usepackage{amsfonts}
\usepackage{amscd}
\usepackage{graphicx}

\usepackage[colorlinks=true,
linkcolor=webgreen,
filecolor=webbrown,
citecolor=webgreen]{hyperref}

\definecolor{webgreen}{rgb}{0,.5,0}
\definecolor{webbrown}{rgb}{.6,0,0}

\usepackage{color}
\usepackage{fullpage}
\usepackage{float}

\usepackage{graphics}
\usepackage{latexsym}
\usepackage{epsf}
\usepackage{breakurl}

\newcommand{\seqnum}[1]{\href{https://oeis.org/#1}{\rm \underline{#1}}}

\def\Enn{\mathbb{N}}
\DeclareMathOperator{\evil}{evil}

\DeclareMathOperator{\inc}{inc}

\begin{document}

\theoremstyle{plain}
\newtheorem{theorem}{Theorem}
\newtheorem{corollary}[theorem]{Corollary}
\newtheorem{lemma}[theorem]{Lemma}
\newtheorem{proposition}[theorem]{Proposition}

\theoremstyle{definition}
\newtheorem{definition}[theorem]{Definition}
\newtheorem{example}[theorem]{Example}
\newtheorem{conjecture}[theorem]{Conjecture}

\theoremstyle{remark}
\newtheorem{remark}[theorem]{Remark}

\title{Frobenius Numbers and Automatic Sequences}

\author{Jeffrey Shallit\footnote{Research funded by a grant from NSERC, 2018-04118.}\\
School of Computer Science\\
University of Waterloo\\
Waterloo, ON  N2L 3G1\\
Canada\\
\href{mailto:shallit@uwaterloo.ca}{\tt shallit@uwaterloo.ca}}

\maketitle

\begin{abstract}
The Frobenius number $g(S)$
of a set $S$ of non-negative integers with $\gcd 1$ is
the largest integer not expressible as a linear combination of elements
of $S$.  Given a sequence ${\bf s} = (s_i)_{i \geq 0}$,
we can define the associated sequence
$G_{\bf s} (i) = g(\{ s_i,s_{i+1},\ldots \})$.  In this paper
we compute $G_{\bf s} (i)$ for some classical automatic sequences:
the evil numbers, the odious numbers, and the lower and upper Wythoff
sequences.   In contrast with the usual methods, our proofs are
based largely on automata theory and logic.

\bigskip

\noindent 2010 AMS MSC Classifications:   11D07, 11B85, 11B83.
\end{abstract}

\section{Introduction}

Let $\Enn = \{ 0,1,2, \ldots \}$ be the natural numbers.
Let $S$ be a nonempty set of natural numbers with $\gcd(S) = 1$,
possibly infinite.
A classical result then says that every sufficiently large integer
can be written as a linear combination of
elements of $S$ with natural number coefficients.
Then $g(S)$, the {\it Frobenius number\/} of $S$,
is defined to be the greatest integer $t$
such that $t$ does {\it not\/} have such a representation.
For example, $g(\{ 6,9,20 \}) = 43$.

The Frobenius number has received a lot of attention in recent
years.
For a detailed discussion of the function $g$, see the books
of Ram{\'i}rez Alfons{\'i}n \cite{RamirezAlfonsin05} and
Rosales and Garc{\'i}a-S{\'a}nchez \cite{Rosales&Garcia-Sanchez:2009}.  

Let ${\bf s} = (s_i)_{i \geq 0}$ be
an increasing sequence of natural numbers such that 
$\gcd(s_i, s_{i+1}, \ldots) = 1$ for all $i \geq 0$.  
For $i \geq 0$ define $G_{\bf s} (i) = g(\{ s_i, s_{i+1}, \ldots \})$, the
Frobenius number of a final segment of $\bf s$, beginning with $s_i$.
Given a sequence ${\bf s}$, it can be an interesting and challenging problem
to compute the Frobenius numbers
$G_{\bf s}(i)$ exactly, or estimate their growth rate.
Computing the Frobenius number is notoriously difficult
because the problem is NP-hard \cite{RamirezAlfonsin96}.

For example, for 
the case where ${\bf s}$ is $1,4,9,16, \ldots$, the sequence of
squares, Dutch and Rickett
\cite{Dutch&Rickett:2012} proved that
$G_{\bf s} (n) = o(n^{2+\epsilon})$ for
all $\epsilon > 0$, and this bound was improved
to $O(n^2)$ by Moscariello \cite{Moscariello:2015}.

In this paper we are interested in calculating $G_{\bf s} (i)$ for
some famous integer sequences $\bf s$ whose characteristic sequence
is automatic.   (The characteristic sequence is the binary
sequence ${\bf b} = (b_i)_{i \geq 0}$ 
where $b_i = 1$ if $i$ occurs in $\bf s$ and $b_i = 0$
otherwise.)
By ``automatic'' we mean 
generated
by a finite automaton in a certain way, described in the next
section.   In particular, we
completely characterize the Frobenius number
for the sequence of evil numbers, the sequence of odious
numbers, and the lower and upper Wythoff sequences.
Although our results are number-theoretic in nature,
our approach is largely via automata theory and logic.

\section{Automatic and synchronized sequences}

A {\it numeration system\/} is a method for writing a non-negative
integer $N$ as a linear combination $N = \sum_{0 \leq i \leq t} a_i d_i$
of some increasing sequence
$d_0 = 1 < d_1 < d_2 < \cdots $ of integers, where the
$a_i$ are chosen from a finite set that includes $0$.
Suppose
\begin{itemize}
\item[(a)]  the representation for $N$ is always unique (up to leading zeros);
\item[(b)]  the set of all valid representations forms a regular language;
\item[(c)]  there is a finite automaton recognizing the triples
$(x,y,z)$ for which $x+y = z$, where the inputs to
the automaton are representations
of integer triples, padded with leading zeros, if necessary, to make them
all of the same length.
\end{itemize}
If all three conditions hold, then we call the numeration system {\it regular}.
A sequence ${\bf a} = (a_i)_{i \geq 0}$ over a finite alphabet
is {\it automatic} if it is computed by an automaton taking the
representation of $i$ as input and returning $a_i$ as the output
associated with the last state reached.  For more about automatic
sequences, see \cite{Allouche&Shallit:2003}.

If a sequence is automatic, then there is a decision procedure for
proving or disproving
assertions about it, provided these assertions are phrased
in first-order logic and using only the logical operations, comparisons
of natural numbers, addition, and indexing into the sequence
\cite{Charlier&Rampersad&Shallit:2012}.

Automatic sequences are restricted in scope because they take their values
in a finite alphabet.  However, it's possible for automata to compute
sequences taking their values in $\Enn$, the natural numbers, using
a different meaning of ``compute''.   We say
a sequence $(a_i)_{i \geq 0}$ is {\it synchronized\/} if there is
a finite automaton recognizing exactly the
representations of pairs $(i, a_i)$, read in parallel.
If a sequence is synchronized, then we can
compute it in linear time.   If the underlying numeration system is
base $k$, then the quantities $\limsup_{i} a_i/i$ and
$\liminf_{i} a_i/i$ are computable \cite{Schaeffer&Shallit:2012}.
For more about synchronized sequences, see
\cite{Carpi&Maggi:2001}.

\section{The odious and evil numbers}

Let $s_2 (n)$ denote the sum of bits of $n$ when represented in
base $2$.   Define $t(n) = s_2 (n) \bmod 2$, the famous
Thue-Morse sequence \cite{Allouche&Shallit:1999}.
The {\it evil numbers}
$0, 3,5,6,9, \ldots$  are those $n$ with $t(n) = 0$,
and the {\it odious numbers} $1,2,4,7,\ldots$ are those $n$ with $t(n) = 1$.
(The somewhat painful terminology is from
\cite[p.~431]{Berlekamp&Conway&Guy:1982}.)
More precisely we have \cite{Allouche&Cloitre&Shevelev:2016}
\begin{align*}
e_n &= 2n + t(n) \\
o_n &= 2n + 1-t(n)
\end{align*}
for $n \geq 0$.
Observe that both sequences obey the $\gcd$ condition,
because $\gcd(e_i, e_{i+1}, e_{i+2}) = \gcd(o_i, o_{i+1}, o_{i+2}) = 1$
for all $i\geq 0$.  Additive properties of these numbers were
studied previously in \cite[Thm.~2]{Rajasekaran&Shallit&Smith:2020}.

Our first goal is to prove the following result:
\begin{theorem}
The function $G_{\bf e} (n)$ is $2$-synchronized.
\label{evilsynch}
\end{theorem}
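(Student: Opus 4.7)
The plan is to encode the graph of $G_{\bf e}$ by a first-order formula in the logical structure $\langle \mathbb{N}, +, e\rangle$, where $e$ denotes the sequence $n \mapsto e_n$. Because $e_n = 2n + t(n)$ and the Thue--Morse sequence $t$ is $2$-automatic, the graph $\{(n, e_n) : n \geq 0\}$ is itself $2$-automatic, i.e., $e$ is already $2$-synchronized and may be used as a function symbol in the logic. Once the graph of $G_{\bf e}$ is defined in this language, the Charlier--Rampersad--Shallit decision procedure guarantees its $2$-automaticity; being functional, the relation is then $2$-synchronized by definition.

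The first ingredient I would establish is a linear upper bound $G_{\bf e}(n) \leq Cn$ for some absolute constant $C$ and all $n \geq 1$. Its crucial consequence is a bounded-representation lemma: if $m \leq (C+1)n$ and $m = \sum_j c_j e_{i_j}$ with $i_j \geq n$ and $c_j \geq 1$, then $m \geq e_n \sum_j c_j \geq 2n \sum_j c_j$, so the total multiplicity of any such representation is bounded by a constant $K$ independent of $n$. Consequently, in the range of $m$ we care about, the statement ``$m$ is representable by $\{e_n, e_{n+1}, \ldots\}$'' is equivalent to the first-order predicate
\[
R(n, m) \;\equiv\; \bigvee_{k=0}^{K} \; \exists\, i_1, \ldots, i_k \geq n \; \bigl(m = e_{i_1} + e_{i_2} + \cdots + e_{i_k}\bigr),
\]
where repeated indices encode multiplicities.

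With $R$ in hand, I would express the graph of $G_{\bf e}$ as
\[
\varphi(n, m) \;\equiv\; m \leq Cn \;\wedge\; \neg R(n, m) \;\wedge\; \forall j \bigl(1 \leq j \leq e_n \Rightarrow R(n, m+j)\bigr),
\]
using the standard observation that once $e_n$ consecutive integers exceeding $m$ are representable, every larger integer is representable by adding copies of $e_n$. Then $\varphi(n, m)$ holds iff $m = G_{\bf e}(n)$, and feeding $\varphi$ to the decision procedure yields the synchronizing automaton.

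The main obstacle is the linear bound $G_{\bf e}(n) = O(n)$: anything weaker than linear growth would force $K$ to grow with $n$ and destroy the first-order description of $R$. To establish it, I would exploit the identities $e_{n+1} - e_n, e_{n+2} - e_{n+1} \in \{1,2,3\}$, which follow from $e_n = 2n + t(n)$ and $t(n+1) - t(n) \in \{-1, 0, 1\}$. A short case analysis on the values of $(t(n), t(n+1), t(n+2))$ should produce, in every case, a pair among $e_n, e_{n+1}, e_{n+2}$ whose $\gcd$ is small, together with explicit short representations covering every residue class modulo the smallest member; this would yield the claimed linear bound on $G_{\bf e}(n)$ and complete the proof.
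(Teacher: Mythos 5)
Your overall architecture is sound and close in spirit to the paper's: both proofs hinge on showing that, in the relevant range, ``representable by $\{e_n,e_{n+1},\ldots\}$'' is equivalent to ``a sum of a bounded number of such terms,'' after which the graph of $G_{\bf e}$ is first-order definable and the decision procedure does the rest. (Your window trick $\forall j\,(1\le j\le e_n \Rightarrow R(n,m+j))$ is a fine substitute for the paper's unbounded quantifier $\forall j>n$, modulo the degenerate case $n=0$ where $e_0=0$, and modulo keeping the constants $C$, $C+1$, $C+3$ consistent.) The genuine gap is the ingredient everything rests on: the linear bound $G_{\bf e}(n)=O(n)$. Your proposed proof of it---a case analysis on $(t(n),t(n+1),t(n+2))$ producing a pair among $e_n,e_{n+1},e_{n+2}$ with small gcd and short representations of every residue class mod $e_n$---cannot work. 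Two coprime generators of size $\Theta(n)$ have Frobenius number $e_ne_{n+1}-e_n-e_{n+1}=\Theta(n^2)$ by Sylvester's formula, and three consecutive terms do no better (arithmetic-progression-like triples $\{a,a+1,a+2\}$ still have Frobenius number $\Theta(a^2)$). To hit all $\sim 2n$ residue classes modulo $e_n$ with sums of boundedly many generators you must use $\Theta(n)$ \emph{distinct} evil numbers from the tail, and proving that this succeeds is precisely the nontrivial additive fact you cannot get from three terms.

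This is exactly the point where the paper does something different: it proves (Lemma~\ref{one}, verified by the decision procedure itself) that any sum of four evil numbers $\ge m$ is already a sum of at most three, and then a minimal-counterexample argument (Lemma~\ref{two}) gives the global bounded-multiplicity statement with $K=3$, with no range restriction and no a priori growth bound needed. Note also that the paper obtains $4m\le G_{\bf e}(m)\le 6m+7$ as a \emph{consequence} of synchronization, not as an input, so you cannot borrow it without circularity. If you want to keep your route, the linear bound can be rescued by hand: consecutive evil numbers differ by at most $3$, so there are $\Theta(n)$ of them in $[2n,4n]$, and the Erd\H{o}s--Graham bound $g(a_1,\ldots,a_k)=O(a_k^2/k)$ applied to this set yields $G_{\bf e}(n)=O(n)$. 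With that lemma in place (and the $n=0$ case treated separately), your argument goes through.
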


In order to prove this we need the following lemmas:

\begin{lemma}
If $n \geq 1$ can be written as the sum of
four evil numbers $\geq m$, then $n$ can be written as the sum of
one, two, or three evil numbers $\geq m$.
\label{one}
\end{lemma}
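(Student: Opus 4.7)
The plan is to prove this by expressing the assertion as a first-order statement about the Thue–Morse (equivalently: evil-number) automatic sequence, and then invoking the decision procedure of Charlier--Rampersad--Shallit (as implemented in Walnut) cited in Section~2. This matches the paper's philosophy that ``our proofs are based largely on automata theory and logic'' and is strongly suggested by the presence of the declared operator \evilchk{} in the preamble.

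First I would fix the basic predicates. The set of evil numbers is 2-automatic: a simple DFA on the base-2 representation (tracking the parity of the number of 1-bits seen) accepts exactly the $n$ with $t(n)=0$. Call this predicate $\evil(x)$. Then for each $k\in\{1,2,3,4\}$ I would define
\[
\evilchk(m,n,k) \;:=\; \exists x_1,\ldots,x_k\;\Bigl(n = x_1+\cdots+x_k \;\wedge\; \bigwedge_{i=1}^{k}\bigl(\evil(x_i)\wedge x_i\geq m\bigr)\Bigr),
\]
which is expressible using only the logical operations, $+$, comparisons, and the evil-predicate indexing permitted in the Charlier--Rampersad--Shallit framework. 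The lemma is then the single first-order sentence
\[
\forall m,n\;\Bigl(\,n\geq 1 \;\wedge\; \evilchk(m,n,4) \;\Longrightarrow\; \evilchk(m,n,1)\vee \evilchk(m,n,2)\vee \evilchk(m,n,3)\Bigr),
\]
which by the decision procedure for 2-automatic sequences is mechanically decidable; running the construction (and checking emptiness of the negation) returns TRUE.

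The main obstacle, then, is not conceptual but computational. The formula contains four existential quantifiers (for the four evil summands in $\evilchk(m,n,4)$) plus three more for the conclusion, and each existential ``$\exists x$'' requires a projection of a DFA over a product alphabet, potentially with an exponential blowup before the subsequent minimization. I expect the construction to be well within reach because the evil-number DFA has only two states, but some care in the order of constructions (building the three-summand predicate first, then the four-summand predicate, and negating the implication before the outer universal closure) is needed to keep intermediate automata small. Once the computation is carried out, no residual hand argument is required: the verification itself constitutes the proof.
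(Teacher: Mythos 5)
Your proposal is correct and follows essentially the same route as the paper: the lemma is translated into a first-order sentence over the $2$-automatic Thue--Morse/evil predicate (with existential quantifiers for the one-, two-, three-, and four-summand representations) and then discharged mechanically by the Charlier--Rampersad--Shallit decision procedure as implemented in {\tt Walnut}. The paper's proof is exactly this translation plus the {\tt Walnut} run returning {\tt true}.
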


\begin{remark}
When we speak of such sums we never insist that the sum be of distinct
integers.
\end{remark}

\begin{proof}
This assertion can be phrased as a first-order formula, namely,
$$\forall m, n \ \evil_{4}(m,n) \Longrightarrow \evil_{1,2,3}(m,n) , $$
where 
\begin{align*}
\evil_{1,2,3}(m,n) & := \exists j, k, \ell \ t(j)=t(k)=t(\ell) = 0 
\, \wedge\, j,k,\ell \geq m \, \wedge\, (n=j \, \vee \, n=j+k \, \vee\, 
n=j+k+\ell) \\
\evil_{4}(m,n) &:= \exists i, j, k, \ell \ t(i)=t(j)=t(k)=t(\ell)=0 
\, \wedge\, i,j,k,\ell \geq m \, \wedge\, n=i+j+k+\ell .
\end{align*}

To prove this, we use the theorem-proving software called {\tt Walnut},
where we can simply translate the statement of the previous
paragraph into {\tt Walnut}'s syntax and evaluate it.
\begin{verbatim}
def evil123rep "Ej,k,l (T[j]=@0) & (T[k]=@0) & (T[l]=@0) &
     j>=m & k>=m & l>=m & (n=j | n=j+k | n=j+k+l)":

def evil4rep "Ei,j,k,l (T[i]=@0) & (T[j]=@0) & (T[k]=@0) & 
(T[l]=@0) & i >= m & j>=m & k>=m & l>=m & (n=j+k+l+m)":

eval evilcheck "Am,n $evil4rep(m,n) => $evil123rep(m,n)":
\end{verbatim}
This returns {\tt true}, so the lemma is proved.
\end{proof}

\begin{remark}
To help in understanding the syntax of {\tt Walnut},
we note the following:
\begin{itemize}
\item {\tt E} represents $\exists$
\item {\tt A} represents $\forall$
\item {\tt T} represents the Thue-Morse
sequence
\item the natural number constant $i$ is written {\tt @i}
\item {\tt \&} represents logical ``and''
\item {\tt |} represents logical ``or''
\item {\tt \char'176} represents logical negation
\item {\tt =>} is logical implication
\item {\tt def} defines a formula for future use
\item {\tt eval} evaluates a logical formula and returns true or false.
\end{itemize}
\end{remark}

\begin{lemma}
Let $n \geq 1$ be a non-negative integer linear combination of
evil numbers $\geq m$.   Then $n$ can be written as the
sum of either one, two, or three evil numbers $\geq m$ .
\label{two}
\end{lemma}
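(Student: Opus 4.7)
The plan is to prove this by induction on the number $k$ of summands (counted with multiplicity) in a representation $n = s_1 + s_2 + \cdots + s_k$, where each $s_j$ is evil and $s_j \geq m$. The hypothesis that $n$ is a non-negative integer linear combination of evil numbers $\geq m$ is exactly the statement that such a representation exists for some $k \geq 1$. The base cases $k \in \{1,2,3\}$ are immediate, since the given representation is already of the required form.

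For the inductive step, suppose $k \geq 4$. Because $n \geq 1$, at least one of the summands, say $s_1$, is strictly positive. I would pick any four summands including $s_1$; say they are $s_1, s_2, s_3, s_4$, and let $n' = s_1 + s_2 + s_3 + s_4$. Then $n' \geq 1$ and $n'$ is a sum of four evil numbers $\geq m$, so Lemma~\ref{one} (Lemma ``one'') applies and gives a representation of $n'$ as a sum of at most three evil numbers $\geq m$. Substituting this representation for $s_1 + s_2 + s_3 + s_4$ inside the original sum yields a representation of $n$ as a sum of at most $k-1$ evil numbers $\geq m$. The inductive hypothesis then finishes the argument.

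The only subtlety I anticipate is the hypothesis $n \geq 1$ in Lemma~\ref{one}, which forbids the degenerate case in which the four chosen summands all equal $0$ (possible only when $m = 0$, since $0$ is evil). The fix above, namely insisting that the four chosen summands include a strictly positive one, handles this cleanly; the existence of such a summand is forced by $n \geq 1$. Beyond that, the whole proof is a straightforward bookkeeping induction, and I do not expect any essential difficulty — in contrast with Lemma~\ref{one} itself, which genuinely required the \texttt{Walnut} decision procedure.
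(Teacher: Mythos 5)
Your proof is correct and is essentially the paper's argument: the paper phrases it as choosing a representation that minimizes the sum of the coefficients and deriving a contradiction if that sum is at least $4$, while you phrase it as induction on the number of summands, but the key step---applying Lemma~\ref{one} to collapse four summands into at most three---is identical. Your extra care in selecting the four summands so that their sum is positive (so that the hypothesis $n\geq 1$ of Lemma~\ref{one} is met when $m=0$) is a point the paper handles only implicitly through minimality of the representation.
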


\begin{proof}
Let $n$ be written as a non-negative
integer linear combination of evil numbers $\geq m$.
Without loss of generality choose a representation for $n$
that minimizes $s$, the sum of the coefficients.   If this sum
is at least $4$, we can write $n = u+v$
where $u$ is the sum of $4$ evil numbers and $v$ is the
sum of $n-4$ evil numbers, all $\geq m$.
But then by Lemma~\ref{one}, we can write 
$u$ as the sum of $3$ evil numbers $\geq m$, so $n$ is
the sum of $n-1$ evil numbers $\geq m$, a contradiction.
So $s$ is no more than $3$, as desired.
\end{proof}

We can now prove Theorem~\ref{evilsynch}.
\begin{proof}
It suffices to give a first-order definition of $G_{\bf e} (n)$.
We can do this as follows:
\begin{verbatim}
def evilg "(Aj (j>n) => $evil123rep(2*m,j)) & ~$evil123rep(2*m,n)":
\end{verbatim}
This gives a $58$-state synchronized automaton computing $G_{\bf e} (n)$.
\end{proof}

Now that we have a synchronized automaton, we can determine 
the asymptotic behavior of $G_{\bf e} (n)$.

\begin{theorem}
We have $4m \leq G_{\bf e} (m) \leq 6m+7$ for all $m \geq 0$.  
These bounds are optimal, because they are attained 
for infinitely many $m$.
\end{theorem}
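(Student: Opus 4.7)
The plan is to leverage the synchronized automaton for $G_{\bf e}$ produced in the proof of Theorem~\ref{evilsynch}. Each of the four sub-claims — the two universal inequalities and the two ``attained for infinitely many $m$'' assertions — is a first-order statement about the synchronizing predicate and base-$2$ arithmetic, so all four can be decided mechanically by {\tt Walnut}, with no further combinatorial ingenuity required.

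First I would verify the two universal bounds. Writing {\tt \$evilg(m,n)} for the predicate $G_{\bf e}(m)=n$ already encoded by the $58$-state synchronized automaton, the bounds translate directly into
\begin{verbatim}
eval lowerbd "Am,n $evilg(m,n) => n >= 4*m":
eval upperbd "Am,n $evilg(m,n) => n <= 6*m+7":
\end{verbatim}
and each should evaluate to {\tt true}, establishing $4m \leq G_{\bf e}(m) \leq 6m+7$ for all $m \geq 0$.

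Next I would establish optimality. The phrase ``attained for infinitely many $m$'' is not literally a first-order predicate, but it is equivalent to the $\forall\exists$ assertion ``for every $M$ there exists $m \geq M$ with $G_{\bf e}(m)$ hitting the bound,'' which is. Hence I would run
\begin{verbatim}
eval lowerexact "AM Em ((m>=M) & $evilg(m,4*m))":
eval upperexact "AM Em ((m>=M) & $evilg(m,6*m+7))":
\end{verbatim}
and expect both to return {\tt true}.

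The main obstacle I foresee is computational rather than mathematical: stacking the $58$-state synchronizer with multiplications by $4$ and $6$, an additional universal quantifier, and the additive constant $7$ can blow up the intermediate automata. If {\tt Walnut} is sluggish, I would factor each formula through auxiliary defined predicates (one capturing $p=4m$, another capturing $q=6m+7$) so that every automaton {\tt Walnut} materialises stays modest. Notably, no combinatorial search for a specific family of extremal $m$ is needed: the synchronized structure is precisely what makes ``for infinitely many $m$'' decidable, and that is the chief advantage of the automata-theoretic framework over a direct Frobenius-style combinatorial attack.
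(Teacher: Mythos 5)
Your proposal is correct and matches the paper's proof essentially verbatim: the paper also verifies the two universal bounds with {\tt Walnut} implications and expresses ``attained for infinitely many $m$'' as the $\forall\exists$ formula {\tt Ai Em,n (m>i) \& \$evilg(m,n) \& n=4*m} (resp.\ {\tt n=6*m+7}). The only differences are cosmetic (inlining {\tt \$evilg(m,4*m)} versus introducing a bound variable {\tt n}).
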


\begin{proof}
We run the following {\tt Walnut} commands, which all evaluate to
{\tt true}.
\begin{verbatim}
eval upperb "Am,n $evilg(m,n) => n <= 6*m+7":
eval upperopt "Ai Em,n (m>i) & $evilg(m,n) & n=6*m+7":
eval lowerb "Am,n $evilg(m,n) => n >= 4*m":
eval loweropt "Ai Em,n (m>i) & $evilg(m,n) & n=4*m":
\end{verbatim}
\end{proof}

\begin{corollary}
We have
\begin{align*}
\inf_{i \geq 1} G_{\bf e} (i)/i &= 4  & 
\sup_{i \geq 1} G_{\bf e} (i)/i &= 7  \\
\liminf_{i \geq 1} G_{\bf e} (i)/i &= 4 &
\limsup_{i \geq 1} G_{\bf e} (i)/i &= 6 .
\end{align*}
\end{corollary}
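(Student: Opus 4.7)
The plan is to read all four equalities off the bounds $4m \leq G_{\bf e}(m) \leq 6m+7$ proved in the preceding theorem, together with the fact that each bound is attained for infinitely many $m$. Only the supremum requires a genuinely new verification.

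For the liminf and limsup, the upper bound $G_{\bf e}(i)/i \leq 6 + 7/i$ forces $\limsup_{i \to \infty} G_{\bf e}(i)/i \leq 6$, while the infinite collection of $m$ with $G_{\bf e}(m) = 6m+7$ (guaranteed by the theorem) provides a subsequence along which $G_{\bf e}(m)/m = 6 + 7/m \to 6$, yielding $\limsup \geq 6$. Symmetrically, $G_{\bf e}(i)/i \geq 4$ gives $\liminf \geq 4$, and the infinitely many $m$ with $G_{\bf e}(m) = 4m$ yield $\liminf \leq 4$. The infimum is then immediate: $G_{\bf e}(i)/i \geq 4$ holds pointwise for $i \geq 1$, giving $\inf \geq 4$, and the same equality cases give $\inf \leq 4$.

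The main obstacle is the supremum, because the theorem's upper bound $6+7/i$ is too crude at small $i$: at $i=1$ it only guarantees a ratio of at most $13$, and nothing said so far rules out the possibility that some small index exceeds $7$. The cleanest way to close this gap is to exploit the synchronized automaton $\mathtt{evilg}$ built in the proof of Theorem~\ref{evilsynch} and verify in Walnut a single first-order statement asserting $G_{\bf e}(m) \leq 7m$ for all $m \geq 1$, together with a companion query producing some $m \geq 1$ with $G_{\bf e}(m) = 7m$, for instance

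\begin{verbatim}
eval supcheck  "Am,n (m>=1 & $evilg(m,n)) => n <= 7*m":
eval supattain "Em,n m>=1 & $evilg(m,n) & n=7*m":
\end{verbatim}

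Granting these two decidable checks (the witness being $m=1$, where $G_{\bf e}(1) = g(\{3,5,6,9,\ldots\}) = g(\{3,5\}) = 7$), we conclude $\sup_{i \geq 1} G_{\bf e}(i)/i = 7$, completing the corollary.
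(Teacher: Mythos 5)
Your proposal is correct, and it follows the route the paper intends: the paper states this corollary with no proof at all, treating it as an immediate consequence of the theorem $4m \leq G_{\bf e}(m) \leq 6m+7$ together with the infinitely-many-attainments claims, which is exactly how you derive $\inf$, $\liminf$, and $\limsup$. The one place where you go beyond what the paper records is the supremum, and your observation there is exactly right: the theorem's bound only gives $G_{\bf e}(i)/i \leq 6+7/i$, which is $13$ at $i=1$, so $\sup = 7$ genuinely needs a further check of the small indices. Your fix --- a Walnut query asserting $G_{\bf e}(m)\leq 7m$ for $m\geq 1$ plus an attainment query, with the hand-verified witness $G_{\bf e}(1)=g(\{3,5,6,9,\ldots\})=7$ --- is in the same decidable, automaton-based spirit as the rest of the paper (alternatively one could note the theorem already gives the bound for $m\geq 7$ and check $m=1,\ldots,6$ against the table). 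So you have supplied a detail the paper silently omits rather than diverging from its method.
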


The sequence $(G_{\bf e} (i))_{i \geq 0}$ has a rather erratic
behavior.  In particular we can prove 
\begin{theorem}
\leavevmode
\begin{itemize}
\item[(a)]  The difference $G_{\bf e} (i+1) - G_{\bf e} (i)$ can
be arbitrarily large.
\item[(b)]  There are arbitrarily long blocks of indices on
which $G_{\bf e}(i)$ is constant.
\end{itemize}
\label{blocks}
\end{theorem}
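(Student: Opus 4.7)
The plan is to exploit the $2$-synchronized automaton for $G_{\bf e}$ built in the proof of Theorem~\ref{evilsynch} and reduce both claims to first-order sentences that {\tt Walnut} can decide. Because the predicate ``$G_{\bf e}(j)=m$'' is realized by the formula $\mathrm{evilg}(j,m)$, we may freely use it inside sentences over $(\Enn,+,<)$ without leaving the decidable fragment, and both assertions of the theorem have the right logical shape.

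For part (a), the statement that $G_{\bf e}(i+1)-G_{\bf e}(i)$ is unbounded is equivalent to saying that, for every constant $K$, there exist $i,m,n$ with $G_{\bf e}(i)=m$, $G_{\bf e}(i+1)=n$, and $n\geq m+K$. I would encode this as
$$\forall K\ \exists i,m,n\ \bigl(\mathrm{evilg}(i,m)\wedge \mathrm{evilg}(i+1,n)\wedge n\geq m+K\bigr)$$
and feed the corresponding {\tt Walnut} translation to {\tt eval}, expecting the output {\tt true}.

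For part (b), the existence of arbitrarily long constant blocks amounts to saying that, for every $L$, there exist $i$ and $m$ such that $G_{\bf e}(j)=m$ for every $j$ in the interval $[i,i+L]$. This is captured by
$$\forall L\ \exists i,m\ \forall j\ \bigl((i\leq j\wedge j\leq i+L)\Rightarrow \mathrm{evilg}(j,m)\bigr),$$
again a first-order sentence decidable by the same procedure.

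The step I expect to be the main obstacle is purely computational, not conceptual: part (b) involves three alternating quantifiers on top of the $58$-state synchronized predicate, and {\tt Walnut}'s intermediate determinizations can blow up severely. If the direct evaluation turns out to be infeasible, the fallback is to argue by pumping on the synchronized automaton itself: by pigeonhole, its accepting runs must contain a cycle along which the first coordinate advances while the second stays fixed, and iterating such a cycle produces plateaus of unbounded length; a symmetric argument on cycles in which the second coordinate grows arbitrarily faster than the first yields (a). In either approach, no new mathematical ingredient beyond the synchronization already established in Theorem~\ref{evilsynch} is needed.
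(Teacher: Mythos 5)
Your proposal is correct and is essentially the paper's own proof: both parts are encoded as first-order sentences over the synchronized predicate $\mathrm{evilg}$ and discharged by {\tt Walnut}, exactly as in the paper's {\tt evilgdiff} and {\tt evalmonotone} commands (which do in fact evaluate to {\tt true}, so your fallback pumping argument is not needed).
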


\begin{proof}
We use the following {\tt Walnut} code.
\begin{verbatim}
eval evilgdiff "Ai Ej,m,n1,n2 (j>=i) & $evilg(m,n1) & $evilg(m+1,n2) & n2=n1+j":
eval evalmonotone "Ai Ej,m,u (j>=i) & $evilg(m,u) &
   (At,v ((t>m) & (t<m+j) & $evilg(t,v)) => u=v)":
\end{verbatim}
Both return {\tt true}.
\end{proof}

We can carry out exactly the same analysis for the odious numbers.
The analogues of Theorem~\ref{evilsynch}
and Lemma~\ref{one} and \ref{two} all hold.
Here are the results:
\begin{theorem}
We have $4m \leq G_{\bf o} (m) \leq 6m-1$ for all $m \geq 1$.
These bounds are optimal, because they are attained
for infinitely many $m$.
\end{theorem}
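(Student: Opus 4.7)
The plan is to mirror the entire pipeline carried out for the evil numbers, simply replacing the predicate $t(\cdot)=0$ by $t(\cdot)=1$ throughout. First, I would establish the odious analogue of Lemma~\ref{one}: if $n\geq 1$ is the sum of four odious numbers $\geq m$, then $n$ is the sum of one, two, or three odious numbers $\geq m$. This is again a first-order assertion over $(\Enn,+,t)$, so I would just feed the corresponding \texttt{Walnut} formulas (obtained from \texttt{evil123rep} and \texttt{evil4rep} by changing \texttt{@0} to \texttt{@1}) to the decision procedure and check that it returns \texttt{true}.

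Next, by the same minimal-coefficient-sum argument used in Lemma~\ref{two}, the odious version extends from ``four-summand'' to ``arbitrary non-negative integer linear combination'': if $n$ were a sum of $s\geq 4$ odious numbers $\geq m$, splitting off four of them and applying the odious analogue of Lemma~\ref{one} would reduce $s$, contradicting minimality. This gives a first-order definition of membership in $\langle o_m,o_{m+1},\ldots\rangle$ (the numerical semigroup starting at $o_m$). The crucial point here is that $o_m = 2m+1-t(m)$ is already synchronized via $t$, so the predicate ``$j\geq o_m$ and $j$ is odious'' is first-order expressible, exactly as in the evil case. Defining
\[
G_{\bf o}(n) = \text{the largest $n$ not so representable}
\]
in the obvious way then yields a synchronized automaton for $G_{\bf o}$.

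Once this automaton is in hand, establishing the bounds $4m \leq G_{\bf o}(m) \leq 6m-1$ for $m\geq 1$ and their optimality reduces to four \texttt{Walnut} queries, analogous to \texttt{upperb}, \texttt{upperopt}, \texttt{lowerb}, \texttt{loweropt} in the preceding proof, but with the constant $7$ replaced by $-1$ on the upper side (and with the universal quantifier restricted to $m\geq 1$, since $o_0=1$ forces $G_{\bf o}(0)$ to be $-1$ or undefined, which must be excluded from the lower-bound statement). The optimality statements are the existential versions: for every $i$ there exists $m>i$ meeting each bound with equality.

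The main obstacle, as with the evil case, is not mathematical but rather a sanity check on the boundary behavior: one must verify that the odious analogue of Lemma~\ref{one} really is true as stated (the constant shift between $o_n$ and $e_n$ means nothing in principle breaks, but Walnut has to confirm it), and one must correctly formulate the $m\geq 1$ restriction so that the lower-bound query does not spuriously fail at $m=0$. Beyond that, the proof is entirely mechanical: build the synchronized automaton, run four decision queries, and read off the result.
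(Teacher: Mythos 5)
Your proposal is correct and is essentially the paper's own proof: the paper disposes of the odious case by saying it carries out ``exactly the same analysis'' as for the evil numbers---odious analogues of Lemmas~\ref{one} and \ref{two}, a $2$-synchronized automaton for $G_{\bf o}$ obtained from the analogous \texttt{Walnut} definition, and then the four bound/optimality queries---which is precisely the pipeline you describe. Your additional care about restricting to $m\geq 1$ (since $G_{\bf o}(0)=-1$) matches the theorem's hypothesis and introduces no difficulty.
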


\begin{corollary}
We have
\begin{align*}
\inf_{i \geq 1} G_{\bf o} (i)/i &= 4  &
\sup_{i \geq 1} G_{\bf o} (i)/i &= 6 \\
\liminf_{i \geq 1} G_{\bf o} (i)/i &= 4 &
\limsup_{i \geq 1} G_{\bf o} (i)/i &= 6 .
\end{align*}
\end{corollary}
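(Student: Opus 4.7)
The plan is to derive the four limiting quantities as a direct consequence of the preceding theorem's assertion that $4m \leq G_{\bf o}(m) \leq 6m-1$ for all $m \geq 1$, together with the fact that each bound is attained for infinitely many $m$. No additional automaton computation should be needed; this is purely an elementary manipulation of the bounds.

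First, dividing the lower bound by $m$ yields $G_{\bf o}(m)/m \geq 4$ for every $m \geq 1$, so $\inf_{i \geq 1} G_{\bf o}(i)/i \geq 4$ and $\liminf_{i \geq 1} G_{\bf o}(i)/i \geq 4$. On the other hand, the theorem asserts that $G_{\bf o}(m) = 4m$ for infinitely many $m$, in particular for arbitrarily large $m$, so the ratio $4$ is actually attained (hence the infimum is $4$) and is attained on an unbounded set of indices (hence the liminf is $4$). For the upper direction, dividing the upper bound by $m$ gives $G_{\bf o}(m)/m \leq 6 - 1/m < 6$ for all $m \geq 1$, so $\sup_{i \geq 1} G_{\bf o}(i)/i \leq 6$ and $\limsup_{i \geq 1} G_{\bf o}(i)/i \leq 6$. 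Since $G_{\bf o}(m) = 6m-1$ for infinitely many $m$, we can pick $m_k \to \infty$ along which $G_{\bf o}(m_k)/m_k = 6 - 1/m_k \to 6$, forcing both the sup and the limsup to equal $6$.

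There is really no obstacle here; the only point worth flagging is the mild asymmetry between the two sides: the infimum $4$ is attained (as an actual value of $G_{\bf o}(m)/m$), whereas the supremum $6$ is only approached in the limit, because the sharp upper bound $6m-1$ strictly undershoots $6m$ for every finite $m$. The same style of bookkeeping handled the evil case, so this corollary is a direct parallel.
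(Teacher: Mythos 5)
Your argument is correct and is exactly the deduction the paper intends: the corollary is stated without proof precisely because it follows immediately from the bounds $4m \leq G_{\bf o}(m) \leq 6m-1$ and their infinitely-often attainment, just as you show. Your remark that the supremum $6$ is approached but never attained (since $G_{\bf o}(m)/m \leq 6 - 1/m$) is a nice clarification consistent with the data in the paper's table.
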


Furthermore, the analogue of Theorem~\ref{blocks} also holds.

The first few terms of the sequences we have discussed in this
section, together with their numbers from the {\it On-Line
Encyclopedia of Integer Sequences} (OEIS) \cite{Sloane}, are given below.

\begin{center}
\begin{tabular}{|c|c|rrrrrrrrrrrrrrrr}
 OEIS& \\
 number& $n$ & 0& 1& 2& 3& 4& 5& 6& 7& 8& 9&10&11&12&13&14&15 \\
\hline
\seqnum{A001969} & $e_n$ & 0& 3& 5& 6& 9&10&12&15&17&18&20&23&24&27&29&30 \\
\seqnum{A342581} & $G_{\bf e} (n)$ & 7& 7&13&14&16&31&31&31&32&55&55&55&55&55&61&62\\
\seqnum{A000069} & $o_n$ & 1& 2& 4& 7& 8&11&13&14&16&19&21&22&25&26&28&31\\
\seqnum{A342579}& $G_{\bf o} (n)$ & $-1$& 5&10&17&23&23&24&34&39&39&45&46&71&71&71&71 
\end{tabular}
\end{center}

\section{Results for the Wythoff sequences}

We can carry out a similar analysis for the lower and upper
Wythoff sequences, defined as follows.  Here we find substantially
different behavior than for the odious and evil numbers.

Let $\varphi = (1+\sqrt{5})/2$, the golden ratio.
Define
\begin{align*}
L_n &= \lfloor n \varphi \rfloor \\
U_n &= \lfloor n \varphi^2 \rfloor 
\end{align*}
for $n \geq0$.
Here, instead of base-$2$ representation, all numbers are
represented in Fibonacci representation (also called
Zeckendorf representation) \cite{Lekkerkerker:1952,Zeckendorf:1972}.
In this representation a number is represented as a linear
combination $\sum_{2 \leq i \leq t} a_i F_i$ with $a_i \in \{ 0, 1 \}$
and subject to the condition that $a_i a_{i+1} = 0$.  We
then define $(n)_F = a_t a_{t-1} \cdots a_2$ and
$[x]_F = \sum_{1 \leq i \leq t} a_i F_{t+2-i}$  if
$x = a_1 \cdots a_t$.

The additive properties of the upper and lower Wythoff
sequences were studied previously in \cite{Kawsumarng&Khemaratchatakumthorn&Noppakaew&Pongsriiam:2021,Shallit:2021}.

\begin{theorem}
The functions $L_n$ and $U_n$ are Fibonacci synchronized.
\end{theorem}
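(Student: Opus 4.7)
The plan is to build a finite-state transducer sending $(n)_F$ to $(L_n)_F$; composing it with an equality check on a second input yields the desired synchronized DFA for $L_n$, and an analogous argument handles $U_n$.

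The backbone is the Binet identity $\varphi F_i = F_{i+1} - \psi^i$, where $\psi = -1/\varphi$. Writing $n = F_{i_1} + F_{i_2} + \cdots + F_{i_k}$ in Zeckendorf form (so $2 \leq i_1$ and $i_{j+1} - i_j \geq 2$), we obtain
\[
n\varphi \;=\; \sum_{j=1}^{k} F_{i_j + 1} \;-\; \sum_{j=1}^{k}\psi^{i_j}.
\]
Set $E := -\sum_j \psi^{i_j}$. Using $|\psi| = \varphi^{-1}$ and the gap condition, a short geometric estimate pins down both the magnitude and the sign of $E$: namely, $|E| \leq \varphi^{-i_1+1} \leq \varphi^{-1} < 1$, and the leading term $\psi^{i_1}$ dominates the tail in absolute value, forcing $\operatorname{sgn}(E) = (-1)^{i_1+1}$. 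Consequently
\[
L_n \;=\; \lfloor n\varphi \rfloor \;=\; \sum_{j} F_{i_j+1} \;+\; \lfloor E \rfloor,
\]
with $\lfloor E\rfloor \in \{-1, 0\}$ determined solely by the parity of the lowest nonzero position $i_1$ in $(n)_F$.

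The sum $\sum_j F_{i_j+1}$ is already in Zeckendorf form, since shifting every index by $1$ preserves the gap condition, and its representation is precisely $(n)_F$ with a $0$ appended. Appending a $0$, detecting the parity of $i_1$ while scanning $(n)_F$, and subtracting $1$ from a Zeckendorf representation are each finite-state operations, and their composition gives the transducer $(n)_F \mapsto (L_n)_F$. For $U_n$, the identity $\varphi^2 = \varphi + 1$ yields $U_n = L_n + n$, and the synchronized automaton for $U_n$ then follows from the fact that synchronized sequences in a regular numeration system are closed under addition.

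The main obstacle is the error-term analysis: establishing $|E| < 1$ together with the parity-based formula for $\operatorname{sgn}(E)$ requires a brief but careful geometric bound using the Zeckendorf gap condition. Everything else — the shift, the $\pm 1$ correction, and the addition step passing from $L_n$ to $U_n$ — is routine, and I expect the resulting synchronized automata to have only a handful of states.
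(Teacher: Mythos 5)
Your proposal is correct, but it reaches the synchronized automata by a different route than the paper. The paper does not analyze Binet error terms at all: it imports the identities $[(n)_F 0]_F = \lfloor (n+1)\varphi\rfloor - 1$ and $[(n)_F 01]_F = \lfloor (n+1)\varphi^2\rfloor - 1$ from Reble's note, and then assembles the synchronized automata concretely from two building blocks, an incrementer (built from the carry identities $[x00(10)^i]_F + 1 = [x010^{2i}]_F$ and $[x0(01)^i]_F + 1 = [x010^{2i-1}]_F$) and a shifter, composed inside {\tt Walnut}. You instead prove the needed digit-level description of $(L_n)_F$ from scratch: $\varphi F_i = F_{i+1} - \psi^i$ gives $L_n = [(n)_F 0]_F + \lfloor E\rfloor$ with $\lfloor E\rfloor \in \{-1,0\}$ decided by the parity of the lowest Zeckendorf index, and your error estimate ($|E| \le \varphi^{-i_1+1} < 1$, leading term dominating the tail) is sound; your identity is essentially equivalent to Reble's after the substitution $n \mapsto n-1$. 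Your treatment of $U_n$ via $U_n = L_n + n$ and closure of synchronized sequences under addition is arguably cleaner than the paper's separate construction through $[(n)_F 01]_F$. What the paper's approach buys is a machine-checkable construction yielding the explicit automata used later; what yours buys is self-containedness (no appeal to the external reference) at the cost of having to argue, as you briefly do, that appending a digit, testing the parity of the trailing $1$'s position (by guess-and-verify in msd-first reading), and decrementing are all regular relations.
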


\begin{proof}
We start by showing that the function $n+1$ is Fibonacci synchronized.
We can construct an automaton $\inc(x,y)$
that computes the relation $y = x + 1$ for $x$ and $y$ in
Fibonacci representation.  
Using the following easily-proven identities,
\begin{itemize}
\item[(a)] $[x00(10)^i]_F + 1 = [x01 0^{2i}]_F$; and
\item[(b)] $[x0(01)^i]_F + 1 = [x01 0^{2i-1}]_F$,
\end{itemize}
we can obtain the incrementer depicted in Figure~\ref{increm} below.
\begin{figure}[H]
\begin{center}
\includegraphics[width=4.5in]{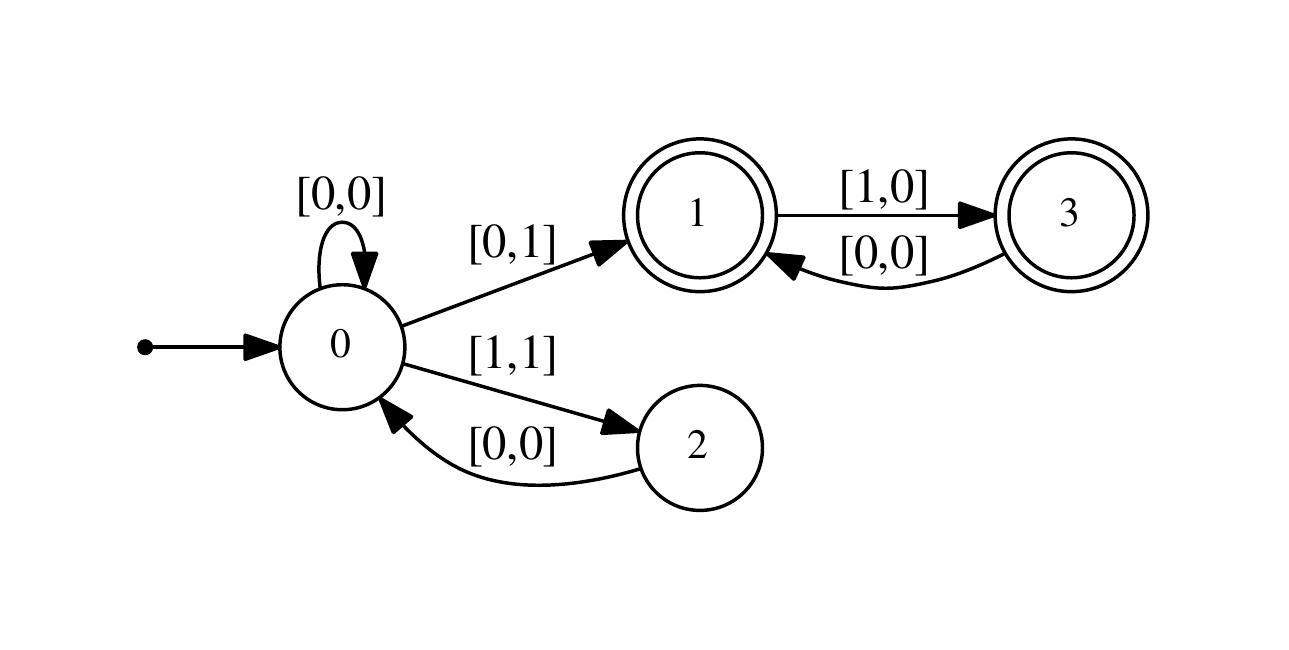}
\end{center}
\label{increm}
\caption{Incrementer for Fibonacci representation}
\end{figure}

Next, we use the identities
\begin{align*}
[(n)_F 0]_F &= \lfloor (n+1) \alpha \rfloor - 1 \\
[(n)_F 01]_F &= \lfloor (n+1) \alpha^2 \rfloor - 1
\end{align*}
for $n \geq 0$, whose proof can be found, for example, in 
\cite{Reble:2008}.   
Substituting $n-1$ for $n$, this gives us formulas for $\lfloor n
\alpha \rfloor$ and $\lfloor n \alpha^2 \rfloor$ in terms of
shifting and incrementation in the Fibonacci representation.
Shifting can be carried out using the Fibonacci synchronized DFA below:
\begin{figure}[H]
\begin{center}
\includegraphics[width=4in]{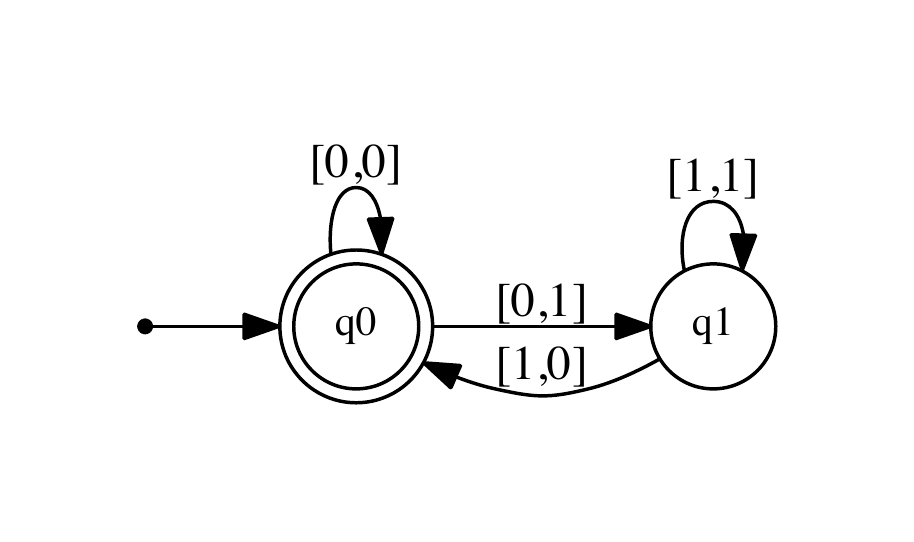}
\end{center}
\label{shif}
\caption{Shifter for Fibonacci representation}
\end{figure}

So we get synchronized automata for (a) and (b) as follows:
\begin{verbatim}
def fibna "?msd_fib ((s=0)&(n=0)) | Et,u $fibinc(u,n) & $shift(u,t) &
     $fibinc(t,s)":
def fibna2 "?msd_fib ((s=0)&(n=0)) | Et,u,v,w $fibinc(u,n) & $shift(u,t)
     & $shift(t,v) & $fibinc(v,w) & $fibinc(w,s)":
\end{verbatim}
with the automata depicted below in Figure~\ref{fibinc}:
\begin{figure}[ht]
\begin{center}
\includegraphics[width=6in]{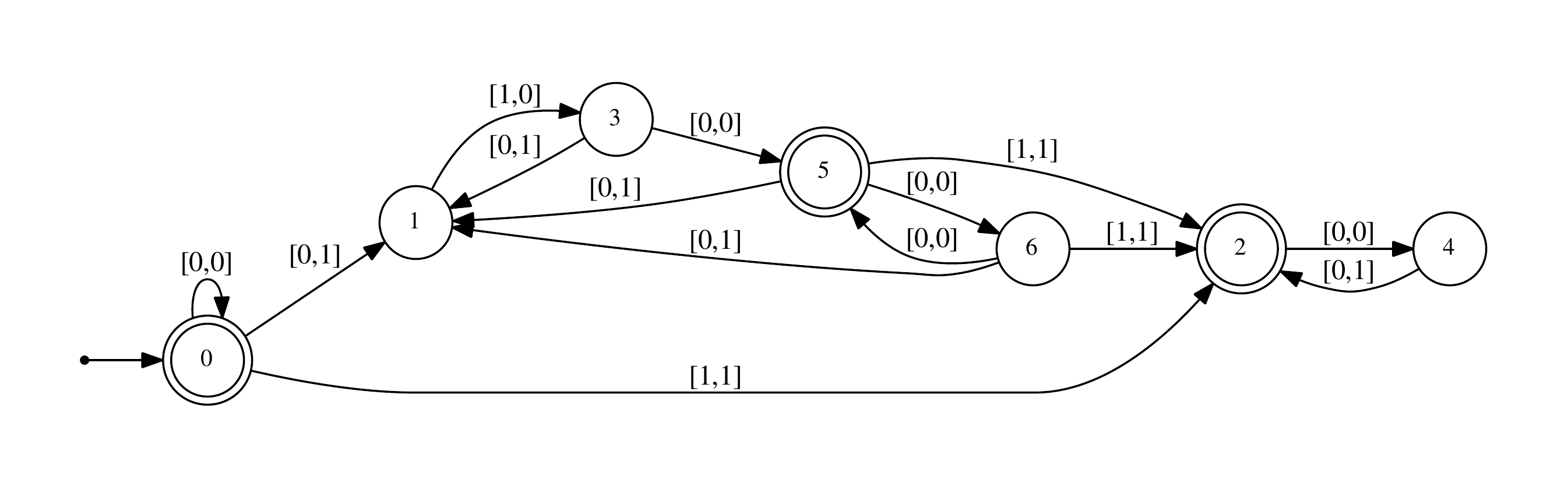}
\includegraphics[width=6in]{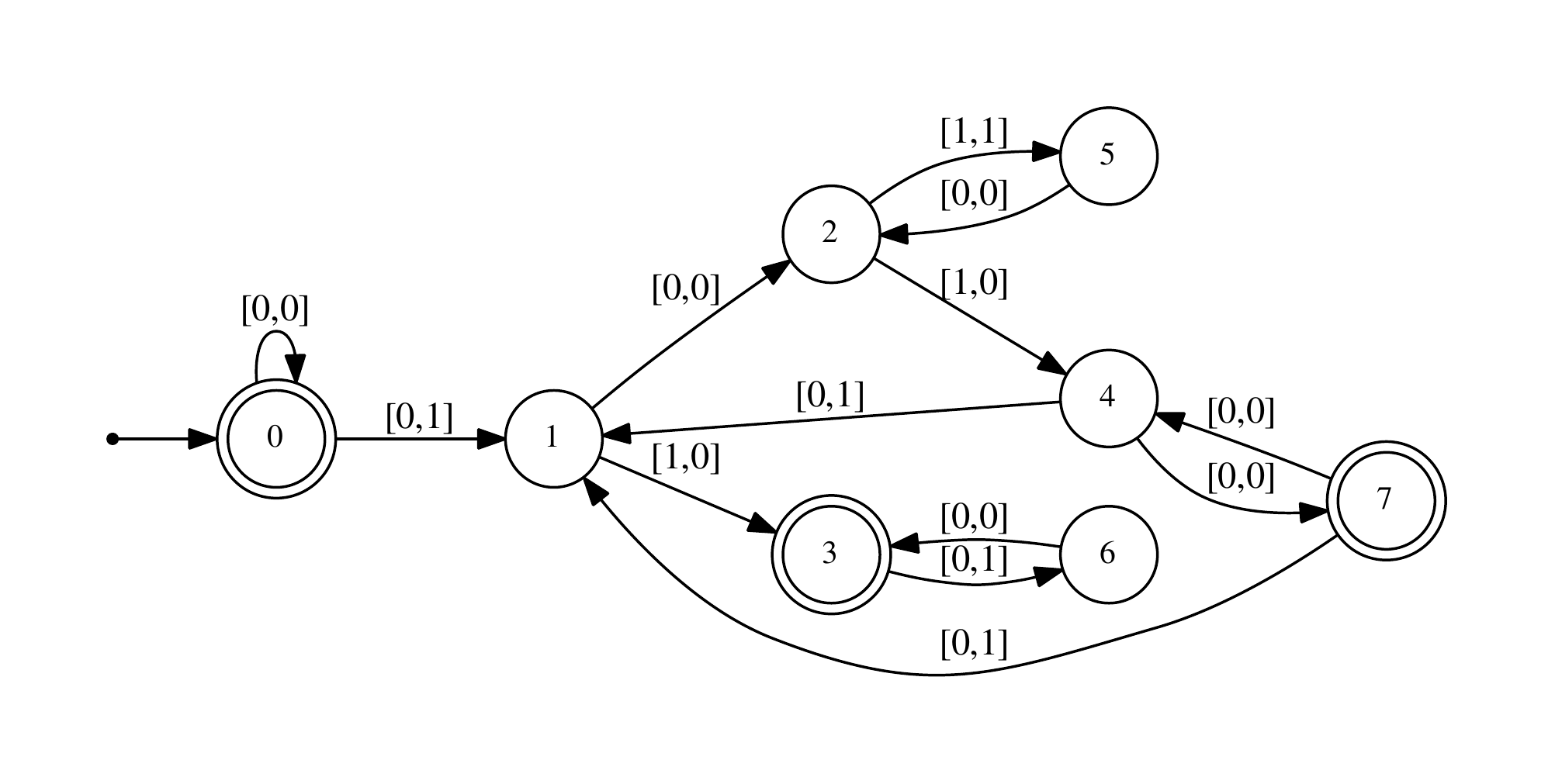}
\end{center}
\caption{Synchronized automata for $\lfloor n \alpha \rfloor$ (top) and
$\lfloor n \alpha^2 \rfloor$ (bottom).}
\label{fibinc}
\end{figure}
\end{proof}

Our next goal is to prove that $G_{\bf L} (n)$ is Fibonacci-synchronized
(that is, there is an automaton recognizing the pairs $(n, G_{\bf L} (n))$
represented in the Fibonacci numeration system).   We start with
some lemmas.

\begin{lemma}
Let $n \geq 1$
be a non-negative integer linear combination of lower Wythoff
numbers $\geq m$.
Then $n$ can be written as the sum of either one or two
lower Wythoff numbers $\geq m $.
\end{lemma}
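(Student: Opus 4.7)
The plan is to follow the same template used for the evil numbers (Lemmas~\ref{one} and~\ref{two}), but with Fibonacci numeration replacing base $2$ and the threshold shifted down by one. Specifically, I would first establish a base step saying that every sum of three lower Wythoff numbers $\geq m$ can already be realized as a sum of one or two lower Wythoff numbers $\geq m$, and then bootstrap from there by a minimality argument on the total number of summands.

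For the base step, I would express the assertion in first-order logic over Fibonacci representations. Since we have just proved that $L_n$ is Fibonacci synchronized, the predicate ``$x$ is a lower Wythoff number'' is definable as $\exists k\ (L_k=x)$, so I can introduce synchronized predicates $\mathrm{wyth}_{1,2}(m,n)$ and $\mathrm{wyth}_{3}(m,n)$ asserting the existence of one-or-two, and of exactly three, lower Wythoff numbers $\geq m$ summing to $n$, and then ask {\tt Walnut} to evaluate
\[
   \forall m,n \ \mathrm{wyth}_3(m,n) \Longrightarrow \mathrm{wyth}_{1,2}(m,n).
\]
Given the base step, the argument for the lemma is essentially that of Lemma~\ref{two}: among all representations $n = \sum c_i L_{k_i}$ with each $L_{k_i}\geq m$, pick one that minimizes $s=\sum c_i$; if $s\geq 3$, split off three of the summands, apply the base step to replace them by at most two lower Wythoff numbers $\geq m$, and obtain a representation with coefficient sum at most $s-1$, contradicting minimality. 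Hence $s\leq 2$.

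The main obstacle is the base step: it carries all the number-theoretic content of the lemma, and in particular explains why the Wythoff threshold is ``at most two'' rather than ``at most three'' as for the evil numbers. A by-hand proof would presumably require a somewhat intricate Zeckendorf case analysis, but because $L_n$ is Fibonacci synchronized the entire verification reduces to a decidable finite-automaton computation carried out by {\tt Walnut}. Once the base step is confirmed, the inductive bookkeeping that finishes the lemma is routine.
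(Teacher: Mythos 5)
Your proposal matches the paper's proof: the paper likewise has {\tt Walnut} verify that every sum of three lower Wythoff numbers $\geq m$ is already a sum of one or two such numbers, and then invokes the same minimality bootstrap as in Lemma~\ref{two}. The only (immaterial) difference is that you encode the Wythoff predicate via the synchronized automaton for $L_n$, while the paper's code uses the Fibonacci word directly.
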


\begin{proof}
It can be carried out with {\tt Walnut} in analogy with the proof
of Lemma~\ref{two}.
\begin{verbatim}
def lower12rep "?msd_fib Ej,k (F[j-1]=@0) & (F[k-1]=@0) & 
     j>=m & k>=m & (n=j|n=j+k)":
def lower3rep "?msd_fib Ej,k,l (F[j-1]=@0) & (F[k-1]=@0) & (F[l-1]=@0) &
     j>=m & k>=m & l>=m & n=j+k+l":
eval fibcheck "?msd_fib Am An $lower3rep(m,n) => $lower12rep(m,n)":
\end{verbatim}
\end{proof}

We can now prove
\begin{theorem}
The function $G_{\bf L} (n)$ is Fibonacci synchronized.
\end{theorem}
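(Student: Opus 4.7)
The plan is to mirror the proof of Theorem~\ref{evilsynch}, using the preceding lemma to collapse representability. That lemma tells us that any positive $n$ lying in the numerical semigroup generated by $\{L_m, L_{m+1}, \ldots\}$ already equals $j$ or $j+k$ for some lower Wythoff numbers $j, k \geq m$, so membership in this semigroup coincides with the predicate \texttt{lower12rep}$(m,n)$ defined just above. Hence $G_{\bf L}(m)$ is simply the largest $n$ at which \texttt{lower12rep}$(m, \cdot)$ fails.

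Concretely, I would form the first-order formula
$$\varphi(m,n) \;:=\; \neg\,\texttt{lower12rep}(m,n)\ \wedge\ \forall j\,\bigl(j > n \Rightarrow \texttt{lower12rep}(m,j)\bigr),$$
and translate it into a {\tt Walnut} command of the shape
\begin{verbatim}
def fiblg "?msd_fib (Aj (j>n) => $lower12rep(m,j)) & ~$lower12rep(m,n)":
\end{verbatim}
Every primitive inside $\varphi$ already lives in the Fibonacci setting: the characteristic sequence $F$ of the lower Wythoff numbers is Fibonacci-automatic, Fibonacci representation admits a synchronized adder, and quantification and comparison are built in. Thus $\varphi$ lies in the decidable theory that {\tt Walnut} handles, and the command above will produce a Fibonacci-synchronized DFA recognizing exactly the pairs $(m, G_{\bf L}(m))$.

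The one point requiring a safety check is that $G_{\bf L}(m)$ is actually well-defined for every $m$, which amounts to $\gcd(L_m, L_{m+1}, \ldots) = 1$. This is transparent from the fact that consecutive values of $L_n$ differ by $1$ or $2$, but it can also be discharged by a short {\tt Walnut} test that $\varphi(m, \cdot)$ has a unique witness for each $m$. Beyond this, the argument is essentially automatic; the main obstacle I expect is not mathematical but computational, since the intermediate automata produced from nested quantifiers over Fibonacci-synchronized relations can blow up unpredictably, and one will want the resulting DFA to stay small enough to support the kind of asymptotic analysis carried out earlier for the evil and odious cases.
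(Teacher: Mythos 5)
Your overall strategy is the right one and matches the paper's, but the formula you write down computes the wrong function, because it conflates the \emph{index} $m$ with the \emph{value} $L_m$. In the predicate \texttt{lower12rep}$(m,n)$, the parameter $m$ is a threshold on the \emph{values} of the lower Wythoff numbers used (the {\tt Walnut} code tests \texttt{j>=m} where $j$ ranges over lower Wythoff numbers), so your $\varphi(m,n)$ recognizes the pairs $\bigl(m,\ g(\{L_j \suchthat L_j \geq m\})\bigr)$. But $G_{\bf L}(m)$ is by definition $g(\{L_j \suchthat j \geq m\}) = g(\{L_m, L_{m+1},\ldots\})$, a tail of the sequence cut at index $m$. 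These differ: for instance, your formula at $m=4$ returns the Frobenius number of $\{4,6,8,9,\ldots\}$, which still contains $L_3=4$, namely $7$; whereas $G_{\bf L}(4)=g(\{6,8,9,11,\ldots\})=13$.

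The missing step is precisely the point of the earlier theorem that $L_n$ is Fibonacci synchronized: one must first convert the index to a value by composing with the synchronized relation $t=L_m$ (the automaton \texttt{fibna}), and then apply your $\varphi$ with threshold $t$. This is what the paper does, defining $G_{\bf L}(m)$ by $\exists t\ (t=L_m)\wedge \varphi(t,n)$; since Fibonacci-synchronized relations are closed under such first-order combinations, the result is again Fibonacci synchronized. (Compare the evil-number case, where the analogous index-to-value conversion is done by writing \texttt{2*m} inside \texttt{evilg}, exploiting $e_m=2m+t(m)$.) With this one-line correction your argument goes through; the rest of what you say --- the reduction via the preceding lemma to sums of at most two terms, the decidability of the resulting formula, and the well-definedness of $G_{\bf L}$ --- is fine.
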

\begin{proof}
We use the {\tt Walnut} commands:
\begin{verbatim}
def lowerunrep "?msd_fib (Aj (j>n) => $lower12rep(m,j)) & ~$lower12rep(m,n)":
def lowerg "?msd_fib Et $fibna(m,t) & $lowerunrep(t,n)":
\end{verbatim}
This gives us a synchronized automaton with $24$ states computing $G_{\bf L} (n)$.
\end{proof}

We can now use this automaton to determine the behavior of
$G_{\bf L} (n)$.
\begin{theorem}
We have $-3 \leq G_{\bf L} (n) - 2 L_n \leq 1$, and the upper
and lower bounds are achieved infinitely often.
\end{theorem}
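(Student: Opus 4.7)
The plan is to mirror exactly the strategy used for the evil and odious numbers: since we now possess a Fibonacci-synchronized automaton for $G_{\bf L}(n)$ (the 24-state automaton \texttt{lowerg} from the previous theorem) and, by the preceding theorem, a Fibonacci-synchronized automaton \texttt{fibna} for $L_n = \lfloor n \varphi \rfloor$, every statement we need to verify is a first-order assertion over $\langle \mathbb{N}, +, <, L, G_{\bf L}\rangle$, and Fibonacci addition is known to be automatic. So in principle each of the four claims (the two inequalities and the two ``infinitely often'' claims) reduces to a decidable Walnut query.

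First I would verify the upper bound by evaluating, in \texttt{?msd\_fib}, the formula
$\forall n, \forall u, v \ \bigl(\texttt{lowerg}(n,u) \wedge \texttt{fibna}(n,v)\bigr) \implies u \leq 2v + 1,$
and symmetrically the lower bound $u + 3 \geq 2v$. Both quantities $2v+1$ and $2v-3$ are simply additions and subtractions of small constants, which Walnut handles directly. Next, to show the bounds are attained infinitely often, I would write the standard ``for every threshold $i$ there exists $n > i$'' pattern:
$\forall i\ \exists n, u, v \ (n > i) \wedge \texttt{lowerg}(n,u) \wedge \texttt{fibna}(n,v) \wedge u = 2v+1,$
and the analogous statement with $u + 3 = 2v$. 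If Walnut returns \texttt{true} on all four queries we are done.

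The main obstacle I anticipate is not mathematical but notational: Walnut does not allow negative numbers directly, so the statement ``$G_{\bf L}(n) - 2L_n \geq -3$'' must be rearranged to $G_{\bf L}(n) + 3 \geq 2L_n$, and one must be slightly careful near $n = 0$ where $L_0 = 0$ and the Frobenius set is degenerate. One should either restrict attention to $n \geq 1$ in the quantifier or check by hand that the small-$n$ cases conform. Apart from this bookkeeping, the verification is routine: the decision procedure for Fibonacci-automatic sequences handles the entire argument, and the nontrivial content of the theorem is already absorbed into the correctness of the synchronized automaton \texttt{lowerg} constructed above.
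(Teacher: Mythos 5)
Your proposal is correct and takes essentially the same route as the paper: the paper's proof consists exactly of four Walnut queries over the synchronized automata \texttt{lowerg} and \texttt{fibna}, expressing the two bounds rearranged to avoid negatives ($n+3\geq 2r$ and $n\leq 2r+1$) together with ``for every threshold there is a larger $m$'' statements for attainment. If anything, your use of the equalities $u=2v+1$ and $u+3=2v$ in the infinitely-often queries matches the attainment claim more faithfully than the paper's printed code, which repeats the inequalities there.
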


\begin{proof}
We use the following {\tt Walnut} commands:
\begin{verbatim}
eval lowerb1 "?msd_fib Am En,r $lowerg(m,n) & $fibna(m,r) & n+3>=2*r":
eval lowerb2 "?msd_fib Am En,r $lowerg(m,n) & $fibna(m,r) & n<=2*r+1":
eval lowerbinf1 "?msd_fib As Em,n,r (m>=s) & $lowerg(m,n) & 
     $fibna(m,r) & n+3>=2*r":
eval lowerbinf2 "?msd_fib As Em,n,r (m>=s) & $lowerg(m,n) &
     $fibna(m,r) & n<=2*r+1":
\end{verbatim}
\end{proof}

\begin{corollary}
We have $\lim_{n \rightarrow \infty} G_{\bf L} (n)/n = 1 +\sqrt{5}$.
\label{nine}
\end{corollary}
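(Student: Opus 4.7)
The plan is to derive the corollary directly from the preceding theorem, which states $-3 \leq G_{\bf L}(n) - 2L_n \leq 1$ for all $n$. This sandwich already gives us $G_{\bf L}(n) = 2L_n + O(1)$, so the only remaining ingredient is the asymptotic behavior of $L_n/n$.

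First I would rewrite the bounds as
\[ \frac{2L_n - 3}{n} \;\leq\; \frac{G_{\bf L}(n)}{n} \;\leq\; \frac{2L_n + 1}{n} \]
for every $n \geq 1$. Then I would invoke the definition $L_n = \lfloor n\varphi \rfloor$, which yields $n\varphi - 1 < L_n \leq n\varphi$, and hence $L_n/n \to \varphi$ as $n \to \infty$. Consequently both the lower and upper bounds tend to $2\varphi$, and by the squeeze principle $G_{\bf L}(n)/n \to 2\varphi$.

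Finally I would simplify $2\varphi = 2 \cdot \frac{1+\sqrt{5}}{2} = 1 + \sqrt{5}$, which is precisely the claimed limit. There is no real obstacle here: the hard work has already been done in establishing the synchronized automaton and extracting the tight additive bounds on $G_{\bf L}(n) - 2L_n$; the corollary is a one-line consequence via the elementary fact that $\lfloor n\varphi \rfloor / n \to \varphi$.
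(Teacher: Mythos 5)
Your proof is correct and follows exactly the route the paper intends: the corollary is stated as an immediate consequence of the bounds $-3 \leq G_{\bf L}(n) - 2L_n \leq 1$, combined with $L_n = \lfloor n\varphi\rfloor$ so that $L_n/n \to \varphi$ and $2\varphi = 1+\sqrt{5}$. The paper leaves this deduction implicit, and your squeeze argument fills it in faithfully.
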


\begin{theorem}
We have $G_{\bf L} (n+1) - G_{\bf L} (n) \in \{ 0,2,3,5,6,8 \}$, 
and furthermore each difference occurs infinitely often.
\end{theorem}

\begin{proof}
We use the following {\tt Walnut} commands:
\begin{verbatim}
eval lowerdiff "?msd_fib Am Eu,v $lowerg(m,u) & $lowerg(m+1,v) & 
      (v=u|v=u+2|v=u+3|v=u+5|v=u+6|v=u+8)":
def ldi "?msd_fib Am Et,u,v (t>=m) & $lowerg(t,u) & $lowerg(t+1,v) & v=u+d":
eval lowerdiffinfcheck "?msd_fib $ldi(0) & $ldi(2) & $ldi(3) & $ldi(5) 
      & $ldi(6) & $ldi(8)":
\end{verbatim}
\end{proof}

\begin{theorem}
There exists a Fibonacci automaton of $11$ states
computing the first difference
$G_{\bf L}(n+1)-G_{\bf L}(n)$.
\end{theorem}
The automaton is depicted in Figure~\ref{aut11} below.
\begin{figure}[H]
\begin{center}
\includegraphics[width=6.5in]{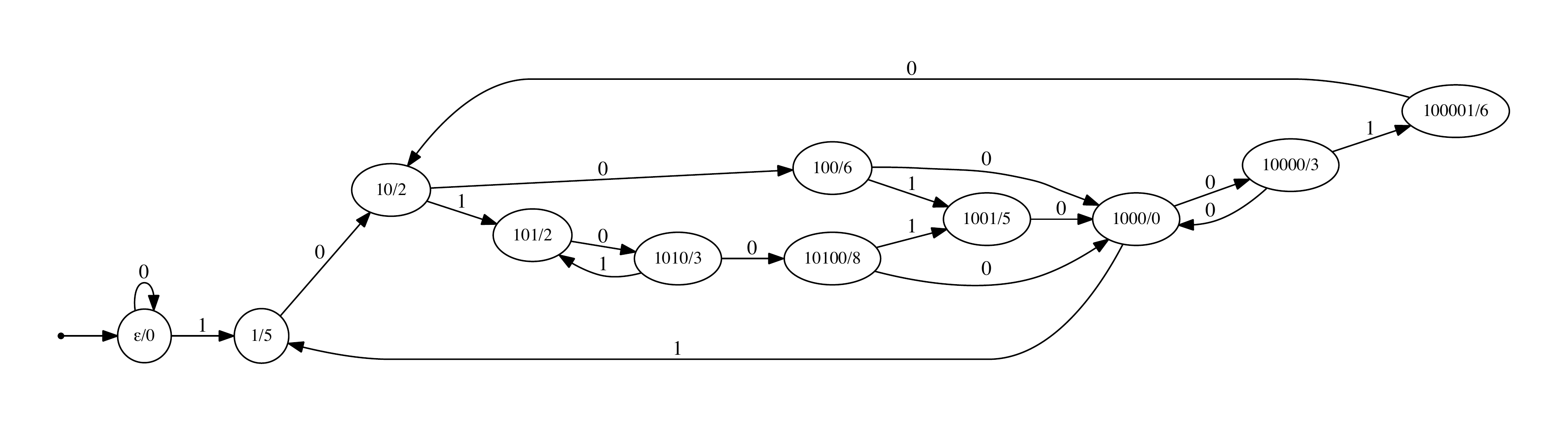}
\end{center}
\caption{Fibonacci automaton computing $G_{\bf L}(n+1)-G_{\bf L}(n)$}
\label{aut11}
\end{figure}

Now we turn to the upper Wythoff sequence.   The results are
completely analogous to the results for the lower Wythoff sequence, and
the proofs are also analogous.  We omit the details.

\begin{lemma}
Let $n \geq 1$
be a non-negative integer linear combination of upper Wythoff
numbers $\geq m$, for $m \geq 3$.
Then $n$ can be written as the sum of either one, two, or
three Wythoff numbers $\geq m $.
\label{l16}
\end{lemma}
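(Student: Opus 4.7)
The plan is to imitate the proof of Lemma~\ref{two}, with ``three'' in place of ``two'' and ``four'' in place of ``three''. First, I would verify the following auxiliary claim in Walnut: if $n \geq 1$ is the sum of four upper Wythoff numbers $\geq m$, with $m \geq 3$, then $n$ can already be written as a sum of one, two, or three upper Wythoff numbers $\geq m$. Using the synchronized automaton \texttt{fibna2} constructed earlier (which recognizes the pairs $(p, U_p)$), I would define first-order predicates $\mathtt{upper123rep}(m,n)$ and $\mathtt{upper4rep}(m,n)$ in the obvious way, conjoin the side condition $m \geq 3$, and then evaluate the implication as a single Walnut command, in direct analogy with \texttt{evilcheck}.

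Given the auxiliary claim, the remainder of the proof is the standard minimum-coefficient-sum argument of Lemma~\ref{two}. Among all non-negative integer combinations of upper Wythoff numbers $\geq m$ totalling $n$, I would pick one minimizing the total coefficient sum $s$. If $s \geq 4$, I would isolate four of the summands to form $u$, so that $n = u + v$ where $v$ is itself a sum of $s - 4$ upper Wythoff numbers $\geq m$. By the auxiliary claim, $u$ can be rewritten as a sum of at most three upper Wythoff numbers $\geq m$; combining with $v$ yields a representation of $n$ with coefficient sum at most $s - 1$, contradicting minimality. Hence $s \leq 3$, which is the desired conclusion.

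The main obstacle is the Walnut verification of the auxiliary claim, and in particular the side condition $m \geq 3$, which is genuinely needed. Dropping it to $m = 2$ produces the counterexample $8 = 2+2+2+2$, which expresses $8$ as a sum of four copies of $U_1 = 2$, but $8$ is not a sum of at most three upper Wythoff numbers $\geq 2$: the only available summands not exceeding $8$ are $\{2,5,7\}$, and none of their at-most-three-fold sums equals $8$. Provided $m \geq 3$ is built into the Walnut query, I expect the implication to evaluate to \texttt{true} and the argument above to go through in complete analogy with Lemma~\ref{two}.
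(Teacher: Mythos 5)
Your proposal is correct and follows essentially the same route the paper intends: the paper explicitly omits the details, saying the upper Wythoff proofs are analogous to the lower Wythoff ones, and your plan (a Walnut check that a sum of four upper Wythoff numbers $\geq m$ with $m \geq 3$ reduces to at most three, combined with the minimum-coefficient-sum descent of Lemma~\ref{two}) is exactly that analogue, including the $m=2$ counterexample $8 = 2+2+2+2$ noted in the paper's remark.
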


\begin{remark}
Lemma~\ref{l16} fails for $m = 2$ because
$8$ is not the sum of one, two, or three Wythoff numbers $\geq 2$,
while it is the sum of four (since $8=2+2+2+2$).
\end{remark}

\begin{theorem}
The function $G_{\bf U} (n)$ is Fibonacci synchronized.
\end{theorem}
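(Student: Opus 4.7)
The plan is to mirror the proof for $G_{\bf L}(n)$, substituting Lemma~\ref{l16} (three summands) for the corresponding two-summand lemma. First, I would introduce a Walnut predicate \texttt{upper123rep(m,n)} that expresses ``$n$ can be written as the sum of one, two, or three upper Wythoff numbers, each $\geq m$.'' Using the synchronized automaton \texttt{fibna2} for $U_n$ built in the earlier theorem, the condition ``$x$ is an upper Wythoff number'' is just $\exists i\ \texttt{fibna2}(i,x)$, so this predicate is immediately expressible in Walnut.

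By Lemma~\ref{l16}, whenever $m \geq 3$ the predicate \texttt{upper123rep(m,n)} is true of exactly those $n$ expressible as some non-negative integer linear combination of upper Wythoff numbers $\geq m$. Hence the largest $n$ for which the predicate fails is precisely $g(\{u \in {\bf U} : u \geq m\})$; I would encode this by a formula \texttt{upperunrep(m,n)} asserting $\neg\texttt{upper123rep}(m,n) \wedge \forall j>n\ \texttt{upper123rep}(m,j)$. Composing with \texttt{fibna2} via $\exists t\ (\texttt{fibna2}(n,t) \wedge \texttt{upperunrep}(t,r))$ then yields a Fibonacci-synchronized automaton for the graph of $G_{\bf U}$, in exact analogy with the definition of \texttt{lowerg} in the $G_{\bf L}$ case.

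The delicate point is the edge case flagged by the remark after Lemma~\ref{l16}: the three-summand reduction fails at $m = 2$, since $8 = 2+2+2+2$ is not a sum of one, two, or three upper Wythoff numbers $\geq 2$. So the composed automaton above is sound only for $n$ with $U_n \geq 3$, i.e., for $n \geq 2$. Since $G_{\bf U}(0)$ is undefined (the gcd condition fails) and $G_{\bf U}(1)$ is a single explicit value, I would handle $n = 1$ either by guarding on $n \geq 2$ in the final formula or by explicitly disjoining in the singleton pair $(1, G_{\bf U}(1))$; in either case the resulting object is a finite automaton recognizing the pairs $(n, G_{\bf U}(n))$. I expect this small-index bookkeeping to be the only real obstacle; everything else reduces to a direct Walnut translation, and the state count of the resulting automaton can simply be read off from Walnut's output.
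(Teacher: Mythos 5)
Your proposal is correct and matches the paper's intended argument: the paper explicitly omits the details here, stating only that the proof is analogous to the lower Wythoff case, and your Walnut construction (\texttt{upper123rep}, \texttt{upperunrep}, composition with the synchronized automaton for $U_n$) is exactly that analogue with Lemma~\ref{l16} in place of the two-summand lemma. Your explicit handling of the $m=2$ failure at $n=1$ (guarding on $n\geq 2$ or hard-coding the pair $(1,3)$) is a necessary precaution that the paper only hints at in the remark following Lemma~\ref{l16}, and you are right that without it the automaton would be unsound at that index.
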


\begin{theorem}
We have $-5 \leq G_{\bf U} (n) - 3 U_n \leq 20$, and these
upper and lower bounds are achieved infinitely often.
\end{theorem}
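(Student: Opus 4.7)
The plan is to mirror the template used for the lower Wythoff bounds, changing only the numerical constants and replacing $L_n$ by $U_n$. By the preceding theorem, we have a Fibonacci-synchronized predicate---call it \texttt{\$upperg(m,n)}---that recognizes exactly the pairs $(m, G_{\bf U}(m))$, and from the Fibonacci-synchronization of $U_n$ we already have \texttt{\$fibna2(m,r)} recognizing $r = U_m$. Both are first-order definable over $(\Enn, +, V_F)$, so anything expressible as a first-order sentence involving them is decidable by Walnut.

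I would encode the four required claims as follows. The two universal bounds
\begin{align*}
\forall m, n, r \ \bigl(\text{\texttt{\$upperg}}(m,n) \wedge \text{\texttt{\$fibna2}}(m,r)\bigr) &\Longrightarrow n \leq 3r + 20, \\
\forall m, n, r \ \bigl(\text{\texttt{\$upperg}}(m,n) \wedge \text{\texttt{\$fibna2}}(m,r)\bigr) &\Longrightarrow n + 5 \geq 3r
\end{align*}
express $G_{\bf U}(n) - 3U_n \in [-5,20]$, and the corresponding ``infinitely often'' assertions $\forall s \ \exists m,n,r \ m \geq s \wedge \text{\texttt{\$upperg}}(m,n) \wedge \text{\texttt{\$fibna2}}(m,r) \wedge n = 3r + 20$ (respectively $n + 5 = 3r$) express that each extreme value is attained for arbitrarily large $m$. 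These translate verbatim into four \texttt{eval} commands patterned on \texttt{lowerb1}, \texttt{lowerb2}, \texttt{lowerbinf1}, \texttt{lowerbinf2}, and by the general decision procedure each will terminate with a verdict of \texttt{true} or \texttt{false}. I expect all four to return \texttt{true} by complete analogy with the lower case.

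Two bits of preparation deserve attention. First, Lemma~\ref{l16} excludes $m = 2$, so when assembling \texttt{\$upperg} one must handle the values $G_{\bf U}(m)$ for small $m$ (say $m \leq 2$, covering the remark's anomaly at $8 = 2+2+2+2$) as hard-coded disjuncts, and only then use the Lemma-based three-summand characterization for $m \geq 3$. Second, the specific constants $-5$ and $20$ are not predicted a priori; in practice one discovers them by sweeping $k$ in queries of the form \texttt{Em,n,r \$upperg(m,n) \& \$fibna2(m,r) \& n=3*r+k} to locate the tightest values, then verifying the resulting universal bounds.

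The main obstacle is therefore not the Walnut verification, which is essentially a copy-paste from the lower Wythoff proof, but the careful construction of \texttt{\$upperg} incorporating the Lemma~\ref{l16} exceptions so that the automaton is correct on all of $\Enn$. Once that automaton is in hand, the four first-order sentences above dispatch the theorem completely.
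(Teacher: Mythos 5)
Your plan is exactly the paper's: the paper explicitly omits the details for the upper Wythoff case, stating that the proof is completely analogous to the lower Wythoff one, i.e., four \texttt{Walnut} queries patterned on \texttt{lowerb1}, \texttt{lowerb2}, \texttt{lowerbinf1}, \texttt{lowerbinf2}, but using the synchronized automata for $U_n$ and $G_{\bf U}(n)$ with the constants $3$, $20$, and $-5$. Your additional caution about building \texttt{\$upperg} so that the small-$m$ exception to Lemma~\ref{l16} (the $8=2+2+2+2$ anomaly) is handled correctly is a legitimate and worthwhile refinement of the same approach, not a departure from it.
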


\begin{corollary}
We have $\lim_{n \rightarrow \infty} G_{\bf U} (n)/n = (9+3 \sqrt{5})/2$.
\end{corollary}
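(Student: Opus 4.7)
The plan is to derive this corollary directly from the preceding theorem by a squeeze argument, paralleling the derivation of Corollary~\ref{nine} from the analogous bounds $-3 \leq G_{\bf L}(n) - 2L_n \leq 1$. Concretely, from $-5 \leq G_{\bf U}(n) - 3U_n \leq 20$ (valid for all sufficiently large $n$, where the bounds hold), dividing through by $n$ gives
\[
\frac{-5}{n} \;\leq\; \frac{G_{\bf U}(n)}{n} - 3 \cdot \frac{U_n}{n} \;\leq\; \frac{20}{n}.
\]
So the first step is just to observe that $G_{\bf U}(n)/n - 3 U_n/n \to 0$ as $n \to \infty$.

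The second step is to supply the asymptotics of $U_n/n$. Since $U_n = \lfloor n \varphi^2 \rfloor$ with $\varphi = (1+\sqrt{5})/2$, and since $\varphi^2 = \varphi + 1 = (3+\sqrt{5})/2$, we have $|U_n - n\varphi^2| < 1$, hence $U_n/n \to \varphi^2 = (3+\sqrt{5})/2$. Combining this with the squeeze from the previous paragraph,
\[
\lim_{n\to\infty} \frac{G_{\bf U}(n)}{n} \;=\; 3 \varphi^2 \;=\; \frac{9 + 3\sqrt{5}}{2},
\]
which is the claim. There is really no obstacle here: the entire content of the corollary is encoded in the previous theorem, and all that remains is the arithmetic identity $3\varphi^2 = (9+3\sqrt{5})/2$ together with the standard fact that $\lfloor n\varphi^2\rfloor/n \to \varphi^2$.
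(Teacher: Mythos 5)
Your proof is correct and is exactly the derivation the paper intends: the corollary is an immediate consequence of the preceding bound $-5 \leq G_{\bf U}(n) - 3U_n \leq 20$ together with $U_n/n \to \varphi^2$, via the same squeeze used implicitly for Corollary~\ref{nine}. The paper omits the argument entirely (stating the upper Wythoff results are analogous), so there is nothing to contrast.
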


\begin{theorem}
We have $G_{\bf U} (n+1) - G_{\bf U} (n) 
\in \{ 0,3,5,8,11,13,18,21,23,26,31 \}$ for $n \geq 1$, 
and furthermore each difference occurs infinitely often.
\end{theorem}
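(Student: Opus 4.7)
The plan is to imitate step-by-step the two \texttt{Walnut} checks that established the analogous theorem for $G_{\bf L}$. The previous theorem gives us a Fibonacci-synchronized automaton, whose defining predicate I will call \texttt{\$upperg}$(n,v)$, recognizing exactly the pairs $(n, G_{\bf U}(n))$ in Zeckendorf representation. Consequently, any first-order sentence about consecutive values of $G_{\bf U}$ can be expressed in the logical fragment understood by \texttt{Walnut} and decided automatically.

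First, I would verify the containment direction by evaluating a single universal statement of the shape
\begin{verbatim}
eval upperdiff "?msd_fib An,u,v (n>=1 & $upperg(n,u) & $upperg(n+1,v)) =>
   (v=u|v=u+3|v=u+5|v=u+8|v=u+11|v=u+13
    |v=u+18|v=u+21|v=u+23|v=u+26|v=u+31)":
\end{verbatim}
which should return \texttt{true}. For the infinitely-often direction, mirroring the predicate \texttt{ldi} used for $G_{\bf L}$, I would introduce
\begin{verbatim}
def udi "?msd_fib Am Et,u,v (t>=m) & (t>=1) & $upperg(t,u)
   & $upperg(t+1,v) & v=u+d":
\end{verbatim}
expressing ``some index $t \geq \max(m,1)$ realizes difference $d$,'' and then evaluate the conjunction of \texttt{\$udi}$(d)$ over the set $D=\{0,3,5,8,11,13,18,21,23,26,31\}$. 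The two verifications together establish the theorem.

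The main obstacle is purely computational, not conceptual. The disjunction inside \texttt{upperdiff} has eleven clauses rather than the six that appeared in the $G_{\bf L}$ case, and the automaton \texttt{\$upperg} is larger than \texttt{\$lowerg} because Lemma~\ref{l16} carries the side condition $m \geq 3$ that must be threaded through its synchronized definition. If the monolithic \texttt{eval} proves memory-intensive, a safe fallback is to split the containment test into eleven per-difference queries; the bound $G_{\bf U}(n) - 3U_n \in [-5,20]$ from the previous theorem confines the set of possible first differences to a finite window, so only finitely many candidate values of $d$ ever need to be examined. Beyond this engineering concern, no mathematical difficulty is anticipated.
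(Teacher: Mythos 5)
Your proposal is exactly the approach the paper intends: the paper omits the details for the upper Wythoff case, stating only that the proof is analogous to the lower Wythoff one, and your two \texttt{Walnut} queries (a universal containment check and a \texttt{udi}-style ``occurs beyond every index'' predicate conjoined over all eleven differences) are the direct analogues of \texttt{lowerdiff}, \texttt{ldi}, and \texttt{lowerdiffinfcheck}. No gap; the only difference is cosmetic (you phrase the containment as an implication rather than the paper's existential form, which is equivalent since the synchronized relation is functional).
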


\begin{theorem}
There exists a Fibonacci automaton 
computing the first difference
$G_{\bf U}(n+1)-G_{\bf U}(n)$.
\end{theorem}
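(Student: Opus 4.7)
The plan is to proceed in direct analogy with the lower Wythoff case depicted in Figure~\ref{aut11}. Two already-established facts do most of the work. First, $G_{\bf U}(n)$ is Fibonacci synchronized, so there is a Fibonacci DFA (call it \texttt{upperg}) recognizing the pairs $(n, G_{\bf U}(n))$. Second, the first difference $G_{\bf U}(n+1) - G_{\bf U}(n)$ takes only the eleven values in the finite set $D = \{0,3,5,8,11,13,18,21,23,26,31\}$. Because the output alphabet $D$ is finite, the first-difference sequence is a Fibonacci-automatic sequence over $D$, so the existence of a DFAO computing it follows on general grounds from the closure properties of automatic sequences. The remaining work is simply to build and minimize this DFAO explicitly.

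Concretely, for each $d \in D$ I would form the Walnut predicate $\varphi_d(n)$ expressing that there exist $u,v$ with \texttt{\$upperg(n,u)}, \texttt{\$upperg(n+1,v)}, and $v = u+d$. Each $\varphi_d$ compiles to a Fibonacci DFA $B_d$ accepting exactly those $n$ with $G_{\bf U}(n+1) - G_{\bf U}(n) = d$. Since the preceding theorem guarantees that $\{L(B_d) \suchthat d \in D\}$ is a partition of $\Enn$, one can then invoke Walnut's \texttt{combine} command to assemble the $B_d$ into a single DFAO whose output at the state reached on input $(n)_F$ is the unique $d$ with $n \in L(B_d)$. Applying Walnut's built-in minimization then produces the desired automaton.

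The main obstacle will not be existence, which is essentially forced by the closure properties just invoked, but rather keeping the resulting state count small enough to be of interest (eleven states sufficed in the lower Wythoff case). The naive combine of eleven Fibonacci DFAs can blow up substantially before minimization, so in practice one would start from a minimized \texttt{upperg}, assemble the $\varphi_d$ one at a time, and rely on Walnut's minimization after each combine step. As a sanity check, one can verify with Walnut that the resulting DFAO satisfies $\varphi_d$ on exactly the inputs whose output label is $d$, for every $d \in D$; the final state count of the minimized machine then appears as part of Walnut's output and can be reported directly in the statement.
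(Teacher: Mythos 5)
Your proposal is correct and matches the paper's (omitted, by-analogy) argument: the paper likewise derives the first-difference automaton from the synchronized automaton for $G_{\bf U}$ together with the finiteness of the difference set, using Walnut to define, combine, and minimize the per-difference predicates exactly as you describe for the lower Wythoff case. Nothing further is needed.
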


The first few terms of the sequences we have discussed in this
section, together with their numbers from the OEIS, are given below.

\begin{center}
\begin{tabular}{|c|c|rrrrrrrrrrrrrrrrr}
 OEIS& \\
 number& $n$ & 0& 1& 2& 3& 4& 5& 6& 7& 8& 9&10&11&12&13&14 \\
\hline
\seqnum{A000201} & $L_n$ & 0&  1&  3&  4&  6&  8&  9& 11& 12& 14& 16& 17& 19& 21& 22 \\
\seqnum{A342715} & $G_{\bf L} (n)$ & $-1$& $-1$&  5&  7& 13& 15& 15& 20& 23& 26& 31& 31& 39& 41& 41 \\
\seqnum{A001950} & $U_n$ & 0&  2&  5&  7& 10& 13& 15& 18& 20& 23& 26& 28& 31& 34& 36 \\
\seqnum{A342716} & $G_{\bf U} (n)$ & $-1$&  3& 16& 19& 42& 42& 42& 55& 58& 76& 79& 79&110&110&110
\end{tabular}
\end{center}

\section{A counterexample}

In all of the examples we have seen so far, if a sequence had automatic
characteristic sequence, then the characteristic sequence of the associated
Frobenius sequence was also automatic.   It is natural to conjecture this
might always be the case.  However, we now prove
\begin{theorem}
Let $s_i = 2^i + 1 $ for $i \geq 0$
and ${\bf s} = (s_i)_{i \geq 0}$. 
Then $G_{\bf s} (i) = 2^{2i} + 2^i + 1$ for $i \geq 1$.
\label{counter}
\end{theorem}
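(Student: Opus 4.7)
The plan is to establish the two directions of the Frobenius claim for $N_0 := 2^{2i}+2^i+1$: first, that $N_0$ is not a non-negative integer combination of elements of $\{s_i, s_{i+1},\ldots\}$, and second, that every integer $N > N_0$ is.

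For non-representability, suppose $N_0 = \sum_{j \geq i} c_j(2^j+1)$ with $c_j \in \mathbb{N}$, and set $C = \sum c_j$. Subtracting $C$ from both sides yields $N_0 - C = \sum_{j \geq i} c_j 2^j$, so $2^i$ divides $N_0 - C$. Since $N_0 \equiv 1 \pmod{2^i}$, this forces $C \equiv 1 \pmod{2^i}$. The inequality $C \cdot s_i \leq N_0$ in turn gives $C \leq N_0/(2^i+1) < 2^i + 1$, so the only option is $C = 1$; but $C = 1$ would require $N_0 = 2^j + 1$ for some $j$, i.e., $2^i(2^i+1) = 2^j$, which is impossible for $i \geq 1$.

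For the converse, I would reformulate representability using the elementary fact that a positive integer $P$ is a sum of exactly $k$ powers of $2$ if and only if $s_2(P) \leq k \leq P$. Setting $P = (N-C)/2^i$, the integer $N$ is representable if and only if there exists $C \geq 1$ with $C \equiv N \pmod{2^i}$, $C \cdot s_i \leq N$, and $s_2(P) \leq C$. The classical Sylvester--Frobenius formula gives $g(s_i, s_{i+1}) = s_i s_{i+1} - s_i - s_{i+1} = 2^{2i+1} - 1$, which already handles every $N \geq 2^{2i+1}$ using only the first two generators. So it suffices to treat $N_0 + 1 \leq N \leq 2^{2i+1} - 1$; writing $N = 2^i Q + R$ with $0 \leq R < 2^i$, we have $Q < 2^{i+1}$ and hence $s_2(Q) \leq i+1 \leq 2^i$.

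A short case analysis then closes the argument. If $R = 0$, the choice $C = 2^i$ works immediately: $P = Q - 1 \geq 2^i + 1 > C$ and $s_2(P) \leq 2^i = C$. If $R \geq 1$, I first try $C = R$ (so $P = Q$); either $s_2(Q) \leq R$ and we are done, or I pass to $C = R + 2^i$, which succeeds whenever $Q \geq R + 2^i + 1$. Both choices fail only when $s_2(Q) > R$ and $Q' := Q - 2^i \in [1,R]$, forcing $s_2(Q') = s_2(Q) - 1 \geq R$ with $Q' \leq R$. A brief inspection shows the only such pair is $R = Q' = 1$, which gives exactly $N = 2^i(2^i+1) + 1 = N_0$, excluded by hypothesis. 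The main obstacle is precisely ensuring that this final pigeonhole isolates $N_0$ as the unique obstruction; the mod-$2^i$ constraint derived at the very start is what makes that happen cleanly.
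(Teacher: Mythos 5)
Your proposal is correct. The non-representability half is essentially the paper's own argument: reducing modulo $2^i$ forces the coefficient sum $C$ to satisfy $C\equiv 1 \pmod{2^i}$, and the size bound $C(2^i+1)\le 2^{2i}+2^i+1<(2^i+1)^2$ leaves only $C=1$, which is absurd. For the second half you take a genuinely different route. The paper exhibits representations of the $2^i+1$ consecutive integers in $(2^{2i}+2^i+1,\,2^{2i}+2^{i+1}+2]$ by an explicit weight-increasing transformation (trading one copy of $2^j+1$ for two copies of $2^{j-1}+1$, which adds $1$), starting from $(2^{2i}+1)+(2^i+1)$, and then adds multiples of $2^i+1$. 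You instead convert representability into an arithmetic criterion: $N$ is representable iff some $C\ge 1$ satisfies $C\equiv N\pmod{2^i}$, $C(2^i+1)\le N$ and $s_2\bigl((N-C)/2^i\bigr)\le C$, via the standard fact that $P\ge 1$ is a sum of exactly $k$ powers of $2$ iff $s_2(P)\le k\le P$ (and $C\le P$ is equivalent to $C(2^i+1)\le N$, so your stated conditions do suffice). Sylvester--Frobenius for the coprime pair $2^i+1$, $2^{i+1}+1$ (note $2(2^i+1)-(2^{i+1}+1)=1$) handles all $N\ge 2^{2i+1}$, and your case analysis on $N=2^iQ+R$ shows that one of $C\in\{R,\,R+2^i\}$ (or $C=2^i$ when $R=0$) works throughout the middle range, the pigeonhole $R\le s_2(Q')\le Q'\le R$ isolating $N_0=2^{2i}+2^i+1$ as the unique failure. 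Your approach buys more: the digit-sum criterion describes the entire numerical semigroup generated by $\{2^j+1:j\ge i\}$, not just its largest gap, and explains structurally why $N_0$ is the only obstruction in that range; the paper's construction is shorter and self-contained, needing neither the two-generator formula nor the digit-sum lemma. Two small expository points you should make explicit: that $N>N_0$ forces $Q\ge 2^i+1$ (and $Q\ge 2^i+2$ when $R=0$), which underlies your claims $Q'\ge 1$ and $P=Q-1>C$; and that for $i=1$ the middle interval is empty, so the argument degenerates harmlessly to the two-generator case.
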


\begin{proof}
It suffices to prove that $2^{2i} + 2^i + 1$ cannot be written as a 
non-negative integer linear combination of $2^i+1,2^{i+1}+1,\ldots 2^{2i}+1$,
while every larger integer can be so expressed.

Suppose $2^{2i} + 2^i + 1 = a_0 (2^i+1) + a_1 (2^{i+1} +1) + 
\cdots + a_i (2^{2i} + 1)$ with $a_0, \ldots, a_i $ non-negative integers.
Considering both sides modulo $2^i$, we see
the left-hand side is $1$, while the right-hand side is $a_0+a_1+\cdots + a_i$.
So either $a_0+a_1+\cdots+a_i =1$ or $a_0+a_1+\cdots+a_i \geq 2^i + 1$.
In the former case we would have $2^{2i} +2^i +1 = 2^j + 1$ for
some $j$, $i\leq j \leq 2i$, which is clearly impossible.
In the latter case we would have $$2^{2i} +2^i + 1 = 
 a_0 (2^i+1) + a_1 (2^{i+1} +1) +
\cdots + a_i (2^{2i} + 1) \geq (a_0 + a_1+\cdots + a_i)(2^i + 1) \geq (2^i+1)^2 , $$
which is also impossible.   This shows $2^{2i} + 2^i + 1$  is
not representable.

We now argue that if $2^{2i} + 2^i + 1 < x \leq 2^{2i} + 2^{i+1} + 2$,
then $x$ has a representation.   This suffices to show that all
$x > 2^{2i} + 2^i + 1$ are representable, because this range contains
$2^i + 1$ consecutive integers, and any $x > 2^{2i} + 2^{i+1} + 2$
can then be represented by adding the appropriate multiple of $2^i + 1$.

Given a particular linear combination $$x = 
a_0 (2^i+1) + a_1 (2^{i+1} +1) +
\cdots + a_i (2^{2i} + 1),$$ 
call its {\it weight\ }  $a_0+\cdots+ a_i$.
We now
repeat the following transformation:  given a linear combination
$x = a_0 (2^i + 1) + \cdots + a_i(2^{2i} + 1)$, find
the largest nonzero $a_j$ in the combination.
Then form the linear combination of $x+1$ by adding $2\cdot (2^{j-1} + 1) 
- (2^j +1) = 1$ to the representation for $x$.
Doing so increases the weight of the
linear combination by $1$, because we add $2$ to one coefficient and
subtract $1$ from another.  

Now let us start the process with the number $(2^{2i}+1) + (2^i + 1)$,
which has a representation of weight $2$.
When we carry out the transformation of the previous paragraph once,
the $1$ coefficient of $2^{2i}+1$ in the linear combination
disappears and a $2$ appears as the coefficient of $2^{2i-1} + 1$.
Doing it twice more causes this $2$ to disppear, and a $4$
appears as the coefficient of $2^{2i-2} + 1$.   This process continues
for a total of $1+2+4+\cdots+ 2^{i-1} = 2^i - 1$ times, eventually
resulting in the representation $2^{2i} + 2^i + 2 + 2^i - 1 
= 2^{2i} + 2^{i+1} + 1 $ as $(2^i+1)(2^i + 1)$ of weight $2^i + 1$.
Finally, $2^{2i} + 2^{i+1} + 2$ has the representation
$(2^{2i}+1)+(2^{i+1} +1)$.
This gives us $2^i+1$ consecutive
representable numbers, as desired, and completes the proof.
\end{proof}

We have now shown that $G_{\bf s} (i) = 2^{2i} + 2^i + 1$
for $i \geq 1$.  Hence we get our desired counterexample:
the characteristic sequence of $(2^i + 1)_{i \geq 0}$ is
automatic, as the set of its base-$2$ representations
is specified by the regular expression
$10^*1$.
But the characteristic sequence of $(G_{\bf s} (i))_{i \geq 1} 
= (2^{2i} + 2^i + 1)_{i \geq 1}$  is not automatic, as
the set of its base-$2$ representations is of
the form $\{ 10^i 10^i 1 \, : \, i \geq 0\}$, which can easily
be seen to be non-regular using a standard tool from formal
language theory called the pumping lemma \cite[Lemma 3.1]{Hopcroft&Ullman:1979}.

\begin{remark}
Theorem~\ref{counter}, in more generality, also appears in
a recent paper of Song \cite{Song:2020}.   However, the proof was
omitted there, so we give it here.
\end{remark}

\section{Concluding remarks}

We conjecture that the analogue of Corollary~\ref{nine} holds for
all Beatty sequences.

For other results of additive number theory based on automata
theory, see \cite{Rajasekaran&Shallit&Smith:2020}.

All the {\tt Walnut} code we used is available from the author's
website, \url{https://cs.uwaterloo.ca/~shallit/papers.html}.


\begin{thebibliography}{99}

\bibitem{Allouche&Cloitre&Shevelev:2016} 
J.-P. Allouche, B. Cloitre, and V. Shevelev.
\newblock Beyond odious and evil.
\newblock {\it Aequationes Math.} {\bf 90} (2016), 341--353.

\bibitem{Allouche&Shallit:1999}
J.-P. Allouche and J.~Shallit.
\newblock The ubiquitous {Prouhet-Thue-Morse} sequence.
\newblock In C.~Ding, T.~Helleseth, and H.~Niederreiter, editors, {\em
  Sequences and Their Applications, Proceedings of SETA '98}, pp.  1--16.
  Springer-Verlag, 1999.

\bibitem{Allouche&Shallit:2003}
J.-P. Allouche and J.~Shallit.
\newblock {\it Automatic Sequences}.
\newblock Cambridge University Press, 2003.

\bibitem{Berlekamp&Conway&Guy:1982}
E. R. Berlekamp, J. H. Conway, and R. K. Guy.
\newblock {\it Winning Ways for your Mathematical Plays, Vol. 2: Games in Particular.}
\newblock Academic Press, 1982.

\bibitem{Carpi&Maggi:2001}
A. Carpi and C. Maggi.
\newblock On synchronized sequences and their separators.
\newblock {\it RAIRO Inform. Th\'eor. App.} {\bf 35} (2001), 513--524.

\bibitem{Charlier&Rampersad&Shallit:2012}
E.~Charlier, N.~Rampersad and J.~Shallit.
\newblock Enumeration and decidable properties of automatic sequences.
{\em Internat. J. Found. Comp. Sci.} {\bf 23} (2012), 1035--1066.

\bibitem{Dutch&Rickett:2012}
K. Dutch and C. Rickett.
\newblock Conductors for sets of large integer squares.
\newblock {\it Notes on Number Theory and Discrete Mathematics}
{\bf 18} (1) (2012), 16--21.

\bibitem{Erdos&Graham:1972}
P. Erd\H{o}s and R. L. Graham.
\newblock On a linear diophantine problem of Frobenius.
\newblock {\it Acta Arith.} {\bf 21} (1972), 399--408.

\bibitem{Hopcroft&Ullman:1979}
J. E. Hopcroft and J. D. Ullman.
\newblock {\it Introduction to Automata Theory, Languages, and 
Computation.}
\newblock Addison-Wesley, 1979.

\bibitem{Kawsumarng&Khemaratchatakumthorn&Noppakaew&Pongsriiam:2021}
S. Kawsumarng, T. Khemaratchatakumthorn, P. Noppakaew, and P. Pongsriiam.
\newblock Sumsets associated with Wythoff sequences and Fibonacci numbers.
\newblock {\it Period. Math. Hung.} {\bf 82} (2021), 98--113.

\bibitem{Lekkerkerker:1952}
C.~G. Lekkerkerker.
\newblock Voorstelling van natuurlijke getallen door een som van getallen van
  {Fibonacci}.
\newblock {\em Simon Stevin} {\bf 29} (1952), 190--195.

\bibitem{Moscariello:2015}
A. Moscariello.
\newblock On integers which are representable as 
sums of large squares.
\newblock {\it Intl. J. Number Theory} 
{\bf 11} (2015), 2505--2511.

\bibitem{Rajasekaran&Shallit&Smith:2020}
A. Rajasekaran, J. Shallit, and T. Smith.
Additive number theory via automata theory.
{\it Theor. Comput. Sys.} {\bf 64} (2020), 542--567.

\bibitem{RamirezAlfonsin96}
J.~L. Ram{\'i}rez~Alfons{\'i}n.
\newblock Complexity of the {F}robenius problem.
\newblock {\em Combinatorica} {\bf 16} (1996), 143--147.

\bibitem{RamirezAlfonsin05}
J.~L. Ram{\'i}rez~Alfons{\'i}n.
\newblock {\em The {D}iophantine {F}robenius Problem}.
\newblock Vol.~30 of
{\em Oxford Lecture Series in Mathematics and its Applications},
Oxford University Press, Oxford, 2005.

\bibitem{Reble:2008}
D.~Reble.
\newblock Zeckendorf vs.\ Wythoff representations: comments on 
\seqnum{A007895}.
\newblock Manuscript available at \url{https://oeis.org/A007895/a007895.pdf},
  2008.

\bibitem{Rosales&Garcia-Sanchez:2009}
J. C. Rosales and P. A. Garc{\'i}a-S{\'a}nchez.
\newblock {\it Numerical Semigroups.}
\newblock Springer, 2009.

\bibitem{Schaeffer&Shallit:2012}
L.~Schaeffer and J.~Shallit.
\newblock The critical exponent is computable for automatic sequences.
\newblock {\em Internat. J. Found. Comp. Sci.} {\bf 23} (2012), 1611--1626.

\bibitem{Shallit:2021}
J. Shallit.
\newblock Sumsets of Wythoff sequences, Fibonacci representation, and beyond.
\newblock To appear, {\it Period. Math. Hung.}, 2021.  
\newblock Preprint at \url{https://arxiv.org/abs/2006.04177}.

\bibitem{Sloane}
N. J. A. Sloane et al.
\newblock {\it The On-Line Encyclopedia of Integer Sequences}.
\newblock Available at \url{https://oeis.org}, 2021.

\bibitem{Song:2020}
K. Song.
\newblock The Frobenius problem for numerical semigroups generated by the
Thabit numbers of the first, second kind base $b$ and the 
Cunningham numbers.
\newblock {\it Bull. Korean Math. Soc.} {\bf 57} (2020), 623--647.

\bibitem{Zeckendorf:1972}
E.~Zeckendorf.
\newblock {Repr\'esentation} des nombres naturels par une somme de nombres de
  {Fibonacci} ou de nombres de {Lucas}.
\newblock {\em Bull. Soc. Roy. {Li\`ege}} {\bf 41} (1972), 179--182.


\end{thebibliography}
\end{document}